\newtheorem{theorem}{\textbf{Theorem}}[section]
\newtheorem{proposition}[theorem]{\textbf{Proposition}}
\newtheorem{lemma}[theorem]{\textbf{Lemma}}
\theoremstyle{definition}
\newtheorem{definition}[theorem]{\textbf{Definition}}
\theoremstyle{remark}
\newtheorem{example}[theorem]{Example}
\newtheorem{remark}[theorem]{Remark}
\title[time optimal control with differential inclusions]{Optimality conditions and regularity results\\ for time optimal control problems\\ with differential inclusions}
\author{Piermarco Cannarsa}
\address{\hspace{-0.5em}\begin{tabular}{ll}Piermarco Cannarsa:&Dipartimento di Matematica,\\& Universit\`a di Roma 'Tor Vergata'\\& 
Via della Ricerca Scientica 1, I-00133 Roma, Italy.\end{tabular}}
\email{cannarsa@mat.uniroma2.it}
\author{Antonio Marigonda}
\address{\hspace{-0.5em}\begin{tabular}{ll}Antonio Marigonda:&Department of Computer Science,\\& University of Verona\\ &Strada Le Grazie 15, I-37134 Verona, Italy.\end{tabular}}
\email{antonio.marigonda@univr.it}
\author{Khai T. Nguyen}
\address{\hspace{-0.5em}\begin{tabular}{ll}Khai T. Nguyen:&Department of Mathematics,\\& Penn State University,\\ &University Park, Pa. 16802, U.S.A.\end{tabular}}
\email{ktn2@psu.edu}
\date{\today}
\keywords{proximal normal vectors, differential inclusions, time optimal control, semiconcave functions}
\subjclass[2010]{34A60, 49J15} 
\thanks{The authors wish to acknowledge the support obtained by the ITN - Marie Curie Grant n.~264735-SADCO, and by the CNRS and  INdAM through the GDRE CONEDP. The third author also wishes to acknowledge the support obtained by the ERC Starting Grant 2009 n.240385 ConLaws in Padova.}
\begin{document}

\begin{abstract}
We study the time optimal control problem with a general target $\mathcal S$ for a class of differential inclusions that satisfy mild smoothness and controllability assumptions. In particular, we do not require Petrov's condition at the boundary of $\mathcal S$. 
Consequently, the minimum time function $T(\cdot)$ fails to be locally Lipschitz---never mind semiconcave---near $\mathcal S$. Instead of such a regularity, we use an exterior sphere condition for the hypograph of $T(\cdot)$  to develop the analysis.  In this way, we obtain dual arc inclusions which we apply to show the constancy of the Hamiltonian along optimal trajectories and other  optimality conditions in Hamiltonian form. We also prove an upper bound for the Hausdorff measure of the  set of all nonlipschitz points of $T(\cdot)$ which implies that  the minimum time function is  of special bounded variation.
\end{abstract}

\maketitle

\section{Introduction}

In this paper we study the time optimal control problem for the differential inclusion
\begin{equation}\label{eq:mfun}\begin{cases}\dot x(t)\in F(x(t)),\textrm{ for a.e. }t>0,
\\ x(0)=x_0\in\mathbb R^n,\end{cases}\end{equation}
with a given closed target set $\mathcal S\subseteq\mathbb R^n$. The dynamics is described by
a set-valued Lipschitz continuous function $F:\mathbb R^n\rightrightarrows\mathbb R^n$, whose values are assumed to
be compact, convex and nonempty.

\medskip

For each trajectory of \eqref{eq:mfun} starting from $x_0$, i.e., for each absolutely continuous function $y^{x_0}(\cdot)\in AC([0,+\infty[;\mathbb R^n)$
satisfying $\dot y^{x_0}(t)\in F(y^{x_0}(t))$ for a.e. $t>0$ and $y^{x_0}(0)=x_0$, we set
\[\theta(y^{x_0}(\cdot)):=\inf\{t\geq 0:\,y^{x_0}(t)\in \mathcal S\},\]
with the convention $\inf\emptyset=+\infty$. The \emph{minimum time function} is defined by
\[T(x_0):=\inf\left\{\theta(y^{x_0}(\cdot)):\, y^{x_0}(\cdot)\textrm{ is a trajectory of \eqref{eq:mfun} with }y^{x_0}(0)=x_0\right\}.\]

\medskip

When finite, $T(x_0)$ represents the minimum time needed to steer the point $x_0$ to the target $\mathcal S$ following the trajectories of \eqref{eq:mfun}.
The study of the regularity of $T(\cdot)$ is a central problem in optimal control theory, and the literature offers a huge choice of  papers on such a topic.
In particular, starting from the seminal paper \cite{CS0}, the regularity property of $T(\cdot)$ called \emph{semiconcavity} was extensively studied and used to deduce new optimality conditions.
Roughly speaking, semiconcavity amounts to the local Lipschitz continuity of $T(\cdot)$ plus a uniform exterior sphere condition for the hypograph of $T(\cdot)$.
Using these geometric properties, one can derive further regularity results for $T(\cdot)$, such as $BV$ estimates for $T(\cdot)$ and $\nabla T(\cdot)$,
 the existence of Taylor's expansion of order two around a. e. point, and bounds for the singular sets of $T$. We refer to \cite{CS} for an introduction to semiconcave functions.

\medskip

Semiconcavity (and semiconvexity) results, however, rely essentially on a strong controllability assumption---the so-called Petrov condition---which is actually equivalent to the local Lipschitz continuity of $T$.
When this assumption is removed, $T(\cdot)$ may fail to be semiconcave, but it still retains the external sphere property of the hypograph (or epigraph).
In \cite{CMW}, for parameterized linear multifunctions, the epigraph of $T(\cdot)$ was proved to satisfy a strong external sphere condition, 
which is called \emph{positive reach property}
in the sense of Federer (also known, in literature, as \emph{$\varphi$-convexity} or \emph{proximal smoothness}). The regularity properties of the class
of functions whose epigraph has positive reach turned out to be comparable with the properties of semiconcave/semiconvex functions, we refer to \cite{CM} for details.

\medskip

The above geometric approach was  also generalized to the nonlinear case: in \cite{CKH}, the positive reach property of the hypograph of $T(\cdot)$ was obtained replacing Petrov's condition by the  assumption
that the normal cone to the hypograph is pointed (i.e., contains no lines). Without such an  assumption, in \cite{NK}, the hypograph of $T(\cdot)$ was proved to
satisfy a weaker external sphere condition. In the same paper, it was shown that such a condition yields essentially the same regularity properties as in the case when
the hypograph has positive reach.

\medskip

The study of the singular set for $T(\cdot)$, beyond the semiconcave case, was performed  in \cite{CM3} for the positive reach case, in \cite{NV} for the weak external sphere
property, and in \cite{MKV} for a further generalized case. In all these papers the structure of the singular set was analyzed, providing upper bounds for the Hausdorff 
measure of such a set. Lower estimates were obtained in \cite{ACK} and \cite{CVL}.

\medskip

So far, all the results in the above papers were proved for a state equation given in the form of  a smooth $C^{1,1}$-parameterized multifunction. The smoothness of the parameterization
was a crucial issue to gain semiconcavity estimates, and was extensively used  in all its generalizations as well. Note that, although a Lipschitz multifunction always
posseses a Lipschitz parameterization (see, for instance, \cite{AF}),  it is still an open problem to find conditions for such a parameterization to be smooth.

\medskip

This motivates a separate study of the minimum time problem for differential inclusions. The first result in this direction is due to \cite{CaW}, where the
semiconcavity of the value function of the Mayer problem for system \eqref{eq:mfun} was proved. Instead of searching for a smooth parametrization of $F(\cdot)$, the main idea of  \cite{CaW} is to use the smoothness of the Hamiltonian 
associated with system \eqref{eq:mfun}, defined by
\begin{equation}\label{eq:ham}H(x,p)=\sup_{v\in F(x)}\langle p,v\rangle,\hspace{2cm}(x,p)\in\mathbb R^n\times\mathbb R^n\,.\end{equation}

\medskip

This approach was extended in \cite{CaMW} to the study of the minimum time function, still in the semiconcavity framework. A further step to extend this kind of analysis
beyond semiconcavity, was performed in \cite{CK}, which can be seen as the counterpart of \cite{CKH} and \cite{NK} for differential inclusions, proving the 
external sphere property enjoyed by $\mathrm{hypo}\,T$ under milder controllability assumptions.

\medskip

The main tool used in our analysis is the \emph{dual arc inclusion} (see \cite{CaMW}), i.e., the natural nonsmooth counterpart 
of the Hamiltonian system associated with the problem, that is,
\begin{equation}\label{eq:Couple-S-intro}
\begin{cases}
\hspace{.3cm}
\dot{x}(t)=\nabla_p H(x(t),p(t)),\vspace{.2cm}\\ 
-\dot{p}(t)\in\partial_xH(x(t),p(t)).
\end{cases}
\end{equation}
If the trajectory $x(\cdot)$ starting from $x_0$ at $t=0$ is optimal, then the above system admits a solution when coupled
with a terminal condition for $p(T(x_0))$ that turns out to be a proximal inner normal to the target set $\mathcal S$ at $\bar x=x(T(x_0))$.

\medskip

In this paper, we prove two results for system \eqref{eq:Couple-S-intro}: in the first one, we state that the superdifferentiability of the minimum time function $T(\cdot)$
along an optimal trajectory is completely determined by the value of the Hamiltonian at the endpoint $(x(T(x_0)),p(T(x_0)))$.
More precisely, if we are able to find a nontrivial proximal inner normal to the target $\mathcal S$ at the endpoint $\bar x=x(T(x_0))$ such that the value of
the Hamiltonian computed at this inner normal at $\bar x$ is nonzero,
then the $p$-part of the solution of \eqref{eq:Couple-S-intro} yields a continuous selection of $\partial^P T$ along the trajectory $x(\cdot)$.
In particular, we have that $\partial^PT(x(t))\neq \varnothing$ for all $t\in[0,T(x_0)]$.

\medskip

Due to the presence of horizontal normals to $\mathrm{hypo}\,T$, the nonsmoothness of $T(\cdot)$ propagates similarly along the flow of an optimal trajectory: indeed
we show that, if the value of
the Hamiltonian computed at this inner normal at $\bar x$ is zero, then the same $p$-part of the solution yields a continuous selection
of $\partial^{\infty} T$ along the optimal trajectory. So, once again, $\partial^\infty T(x(t))\neq \varnothing$ for all $t\in[0,T(x_0)]$.

\medskip

The second  result of this paper, which is a consequence of the above propagation of superdifferentiability, is concerned with the constancy
of the Hamiltonian along optimal trajectories, which somewhat resembles the classical case. In order to prove this result, we use the fact that the value of the
Hamiltonian, at the terminal point of an optimal trajectory, computed at a suitable inner normal, yields the existence of a supergradient of $T$,
or of an horizontal supergradient of $T$, at \emph{all} points of the trajectory. This fact forces the value of the Hamiltonian, along the solution of the dual arc inclusion, 
to be identically $1$ in the former cases, and $0$ in the latter. 

\medskip

A partial converse of the above result, i.e., a sufficient condition for optimality, is also established in the case of superdifferentiability.
We show that, if we have a solution of the generalized characteristic system
\[\begin{cases}\hspace{.3cm}\dot x(t)=\nabla_p H(x(t),p(t)),\vspace{.2cm}\\  - p(t)\in \partial^PT(x(t)),\end{cases}\]
along which the Hamiltonian is constantly equal to $1$, then the $x$-part of the solution is actually an optimal trajectory.

\medskip

The last part of the paper is devoted to the study of $SBV$ regularity of the minimum time function, which guarantees that the singular part of the distributional gradient of  $T(\cdot)$ has no Cantor component---a property that has several applications to the calculus of variations (see, e.g., \cite{AFP}). 
Actually, the result we prove is slightly stronger than just establishing the $SBV$ regularity of $T(\cdot)$. Indeed, Theorem~\ref{thm:sbv} ensures that, if the subset of $\partial\mathcal S$ at which 
$H$ vanishes for a nontrivial choice of inner normals is \emph{small enough}, then there are \emph{few}
optimal trajectories along which the normal cone to $\mathrm{hypo}\,T$ has an horizontal part. So, at \emph{most} points, $\partial^PT(x)$ 
is bounded and $T(\cdot)$ turns out to exhibit a Lipschitz behaviour.

\medskip

The paper is structured as follows: in Section \ref{sec:prel} we fix the notation, and recall definitions and  preliminaries from nonsmooth analysis, expecially concerning differential inclusions. 
In Section \ref{sec:stand}, we discuss our standing assumptions and recall some useful results from \cite{CaW}, while Section \ref{sec:main} is devoted to the main results of the paper and the analysis of some of their consequences.  

\section{Preliminaries and notation}\label{sec:prel}

Our ground space will be the Euclidean space $\mathbb R^n$.

\begin{definition}
Let $ \Omega,K,$ and $S$ be, respectively, an open, closed, and any subset of $\mathbb R^n$, let $x=(x_1,\dots,x_n)$ and $y=(y_1,\dots,y_n)$ be points of $\mathbb R^n$, let $r>0$, and let $k\geq 0$ be an integer.
We denote by:

{\allowdisplaybreaks
\begin{tabular}{rll}
$\langle x,y\rangle$&$:=\displaystyle \sum_{i=1}^n x_iy_i$&the \emph{scalar product} in $\mathbb R^n$;\\
$\|x\|$&$:=\displaystyle \sqrt{\langle x,x\rangle}$&the \emph{Eulidean norm} in $\mathbb R^n$;\\
&$\partial S,\,\mathrm{int}(S),\,\overline{S}$&the \emph{topological boundary},\\&& \emph{interior} and \emph{closure} of $S$;\\
$\mathrm{diam}(S)$&$:=\sup\{\|z_1-z_2\|:\,z_1,z_2\in S\}$&the \emph{diameter} of $S$;\\
$\mathcal P(S)$&$:=\{B\subseteq\mathbb R^n:\,B\subseteq S\}$&the \emph{power set} of $S$;\\
$\mathbb B^n$&:=$\{w\in\mathbb R^n:\,\|w\|< 1\}$&the \emph{unit open ball}\\&& (centered at the origin);\\
$\mathbb S^{n-1}$&$:=\{w\in\mathbb R^n:\,\|w\|=1\}=\partial\mathbb B^n$&the \emph{unit sphere}\\&& (centered at the origin);\\
$B(y,r)$&$:=\{z\in\mathbb R^n:\,\|z-y\|<r\}=y+r\mathbb B^n$&the \emph{open ball} centered\\&& at $y$ of radius $r$;\\
$d_K(y)$&$:=\mathrm{dist}(y,K)=\min\{\|z-y\|:\,z\in K\}$&the \emph{distance} of $y$ from $K$;\\
$\pi_K(y)$&$:=\{z\in K:\, \|z-y\|=d_K(y)\}$&the \emph{set of projections} of $y$\\&& onto $K$;\\
$S^c$&$:=\mathbb R^n\setminus S$&the complement  of $S$.
\end{tabular}
}

\medskip
Also, $C^k(\Omega)$ stands for the space of all  $f:\Omega\to\mathbb R$ with continuous derivatives up to order $k$,
$C^k_b(\Omega)$ collects the functions of $C^k(\Omega)$ with bounded derivatives of order $k$,
and $C^k_c(\Omega)$  the ones with compact support.

\medskip
If $\pi_K(y)=\{\xi\}$, i.e. it is a singleton, we will identify the set $\pi_K(y)$ with its unique element and write $\pi_K(y)=\xi$.
The \emph{characteristic function} $\chi_S:\mathbb R^d\to\{0,1\}$ of $S$ is defined as $\chi_S(x)=1$ if $x\in S$ and $\chi_S(x)=0$ if $x\notin S$.
\end{definition}

\begin{definition}
Let $X$ be a vector space. A set $C\subseteq X$ is \emph{convex} if for every $x_1,x_2\in C$, $\lambda\in[0,1]$, we have that $\lambda x_1+(1-\lambda)x_2\in C$.
If $S\subseteq X$ is a set, the smallest (with respect to inclusion) convex set which contains $S$ is called the \emph{convex hull} of $S$ and it is defined as
\[\mathrm{co}\,S:=\bigcap_{C\in \mathscr F(S)}C,\]
where $\mathscr F(S):=\{C\subseteq X:\,C\supseteq S\textrm{ and }C\textrm{ is convex}\}$.
We have that $S=\mathrm{co}\,S$ iff $S$ is convex.
\end{definition}

\begin{definition}
Let $X$ be a vector space and $f:X\to \mathbb R\cup\{\pm\infty\}$ be a function. We recall the definitions of
\begin{align*}
\mathrm{dom}\,f&:=\{x\in X:\, f(x)\in\mathbb R\},\textrm{ the \emph{domain} of $f$};\\
\mathrm{epi}\,f&:=\{(x,\beta)\in X\times\mathbb R:\, x\in\mathrm{dom}\,f,\, \beta\ge f(x)\},\textrm{ the \emph{epigraph} of $f$};\\
\mathrm{hypo}\,f&:=\{(x,\alpha)\in X\times\mathbb R:\, x\in\mathrm{dom}\,f,\, \alpha\le f(x)\},\textrm{ the \emph{hypograph} of $f$}.
\end{align*}
If $X$ is a topological vector space, we say that $f$ is \emph{lower semicontinuous}
(shortly: l.s.c.) if $\mathrm{epi}\,f$ is closed in $X\times\mathbb R$ with respect to the product topology on $X\times \mathbb R$,
i.e. \[\liminf_{y\to x}f(y)\ge f(x).\]
A function $g$ is called \emph{upper semicontinuous} (shortly u.s.c.) if $-g$ is l.s.c.
\end{definition}

\begin{definition}[Lipschitz functions]
Given the Banach spaces $X,Y$, and two open sets $U\subseteq X$, $V \subseteq Y$, a function $f : U \to V$ is said to be a \emph{Lipschitz continuous function} 
($f\in \mathrm{Lip}(U)$) if there exists $C>0$, called a \emph{Lipschitz constant}, such that for every $x_1,x_2\in U$
\[\|f(x_1)-f(x_2)\|_Y \leq C \|x_1-x_2\|_X.\] 
A function $f:U\to V$ is called \emph{locally Lipschitz continuous} ($f\in \mathrm{Lip}_{\mathrm{loc}}(U)$) if it is Lipschitz continuous on every compact subset of $U$. 
Given $x\in X$, we say that $f:X\to Y$ is Lipschitz continuous at $x$ if there exists a neighborhood $U$ of $x$ in $X$ such that $f:U\to Y$ is Lipschitz continuous.
\end{definition}

We recall the following classical result on regularity of Lipschitz functions holding for spaces of finite dimensions:

\begin{theorem}[Rademacher's Theorem]
Let $X$ be a Banach space of finite dimension, let $U\subseteq X$ be open, 
and let $f: U \to \mathbb R$ be a locally Lipschitz function. 
Then $f$ is differentiable almost everywhere with respect to Lebesgue measure.
\end{theorem}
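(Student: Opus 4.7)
The plan is to reduce to the scalar case $f\colon U\subseteq\mathbb R^n\to\mathbb R$ (a Lipschitz vector-valued $f$ is differentiable at $x$ iff each component is) and then establish Fr\'echet differentiability a.e.\ by first constructing directional derivatives in every direction a.e., then proving linearity in the direction, and finally upgrading to true differentiability via the Lipschitz estimate. Throughout I may assume $U=\mathbb R^n$ by extending $f$ Lipschitzly (e.g.\ via $f(x):=\inf_{y\in U}\{f(y)+L\|x-y\|\}$, using Kirszbraun or MacShane) and working locally.

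First I would show that for each fixed $v\in\mathbb S^{n-1}$, the directional derivative
\[
D_v f(x)\;:=\;\lim_{t\to 0}\frac{f(x+tv)-f(x)}{t}
\]
exists for Lebesgue-a.e.\ $x\in\mathbb R^n$. Fixing any hyperplane $H\subset\mathbb R^n$ transverse to $v$, the restriction $t\mapsto f(x+tv)$ is real-valued Lipschitz, hence absolutely continuous, hence differentiable at a.e.\ $t$ by the one-dimensional Lebesgue theorem. Fubini on $H\times\mathbb R$ then yields the claim. Choosing a countable dense set $\{v_k\}\subset\mathbb S^{n-1}$, there is a full-measure set $E\subset\mathbb R^n$ at every point of which \emph{all} the $D_{v_k}f$ exist and, by Rademacher's theorem in the direction $e_j$ applied componentwise, the ordinary partial derivatives $\partial_j f(x)$ exist too; set $\nabla f(x):=(\partial_1 f(x),\dots,\partial_n f(x))$ on $E$.

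The crucial step is to show that on some full-measure subset of $E$, the map $v\mapsto D_v f(x)$ agrees with the linear functional $v\mapsto\langle\nabla f(x),v\rangle$. I would test against arbitrary $\varphi\in C_c^\infty(\mathbb R^n)$: by dominated convergence (the difference quotients are uniformly bounded by the Lipschitz constant $L$) and a change of variables,
\[
\int_{\mathbb R^n}D_v f(x)\,\varphi(x)\,dx
\;=\;\lim_{t\to 0}\int_{\mathbb R^n}\frac{f(x+tv)-f(x)}{t}\varphi(x)\,dx
\;=\;-\int_{\mathbb R^n}f(x)\,D_v\varphi(x)\,dx.
\]
The right-hand side is linear in $v$, so for every $\varphi$ we have $\int(D_v f-\langle\nabla f,v\rangle)\varphi\,dx=0$, whence $D_v f(x)=\langle\nabla f(x),v\rangle$ a.e.\ Choosing $v$ from the countable dense family $\{v_k\}$ and intersecting the resulting null sets, I obtain a full-measure set $E'\subset E$ on which $D_{v_k}f(x)=\langle\nabla f(x),v_k\rangle$ for every $k$.

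It remains to promote this partial information to Fr\'echet differentiability at each $x\in E'$. Given $\varepsilon>0$, pick finitely many $v_{k_1},\dots,v_{k_N}$ that $\varepsilon$-cover $\mathbb S^{n-1}$; for $v\in\mathbb S^{n-1}$ write $v=v_{k_i}+w$ with $\|w\|<\varepsilon$, and estimate
\[
\Bigl|\frac{f(x+tv)-f(x)}{t}-\langle\nabla f(x),v\rangle\Bigr|
\;\le\;\Bigl|\frac{f(x+tv_{k_i})-f(x)}{t}-\langle\nabla f(x),v_{k_i}\rangle\Bigr|+(L+\|\nabla f(x)\|)\varepsilon,
\]
using that $f$ is $L$-Lipschitz to control the $w$-perturbation. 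The first term tends to $0$ as $t\to 0$ by the previous step, uniformly in the finite index $i$, and $\varepsilon$ is arbitrary, so the limit defining Fr\'echet differentiability is uniform in $v\in\mathbb S^{n-1}$. The main obstacle in the argument is the linearity step; once the distributional identity above is set up, everything else is a bookkeeping exercise with Fubini and uniform Lipschitz estimates.
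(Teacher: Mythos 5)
The paper does not prove this statement: it is recalled as a classical fact (Rademacher's theorem) with no argument given, so there is nothing internal to compare your proof against. Your proposal is the standard and correct proof (the one found, e.g., in Evans--Gariepy or Federer): (1) existence a.e.\ of $D_vf$ for each fixed $v$ by reducing to the one-dimensional Lebesgue differentiation theorem along lines and applying Fubini; (2) the identity $D_vf(x)=\langle\nabla f(x),v\rangle$ a.e., obtained by testing the difference quotients against $\varphi\in C_c^\infty$, passing to the limit by dominated convergence, and using that the right-hand side $-\int f\,D_v\varphi$ is linear in $v$; (3) the upgrade to Fr\'echet differentiability on the countable intersection $E'$ via a finite $\varepsilon$-net on $\mathbb S^{n-1}$ together with the uniform Lipschitz control $\bigl|\frac{f(x+tv)-f(x)}{t}-\frac{f(x+tv')-f(x)}{t}\bigr|\le L\|v-v'\|$. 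All three steps are sound. Two cosmetic remarks: the phrase ``by Rademacher's theorem in the direction $e_j$'' reads as circular --- what you mean is simply step (1) applied with $v=e_j$, which yields the partials $\partial_jf$ a.e.\ without any appeal to the theorem being proved; and since $f$ is only locally Lipschitz on $U$, you should either work on an exhaustion of $U$ by balls on which $f$ is Lipschitz (a.e.\ differentiability being a local property) or note that the McShane extension is applied to such a local Lipschitz piece. Neither point affects correctness.
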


\begin{definition}[$BV$ functions]
Let $\Omega$ be an open subset of $\mathbb R^n$, $u\in L^1(\Omega)$. We say that $u$ is 
a \emph{function of bounded variation} in $\Omega$ and write $u\in BV(\Omega)$ 
if there exist Radon measures $\mu_1,\dots,\mu_n$ on $\Omega$ such that
\[\int_{\Omega} u(x) \partial_{x_i}\varphi(x)\,dx=-\int_{\Omega} \varphi(x) d\mu_i(x),\hspace{1cm}\textrm{ for all }\varphi\in C^1_c(\Omega).\]
The vector-valued measure $Du:=(\mu_1,\dots,\mu_n)$ is called the distributional (full) gradient of $u$.
We say that $u\in BV_{\mathrm{loc}}(\Omega)$ if $u\in BV(\Omega')$ for every bounded open subset $\Omega'$ of $\Omega$.
\end{definition}
We now recall a few notions related to functions of bounded variation, referring  the reader  to  \cite{AFP} for more details.  If $u\in BV$, then $Du$ can be decomposed into an {\em absolutely continuous part}  w.r.t. the Lebesgue measure, herafter denoted by $\nabla u\, dx$ or   $\nabla u$, and a {\em singular part}, $D^su$. One can further introduce the {\em jump part of}  $D^su$, $Ju$, taking the restriction of $D^su$ to the set of all points of $\Omega$ at which $u$ has no approximate limit. Then, the difference $D^su-Ju$ is the so-call {\em Cantor part of} $D^su$, labeled $D^cu$.
Finally, we say that $u\in BV_{\mathrm{loc}}(\Omega)$ is a 
special function of locally bounded variation, and we write $u\in SBV_{\mathrm{loc}}(\Omega)$, if the Cantor part  $D^cu$ vanishes. 
\begin{proposition}[Proposition~4.2 in \cite{AFP}] \label{propSBV}  Let $u\in BV(\Omega)$. Then $u \in SBV(\Omega)$ if 
and only if $D^su$ is concentrated on a Borel set of $\sigma$-finite  $\mathscr{H}^{n-1}$ measure.
\end{proposition}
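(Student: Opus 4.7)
The plan is to exploit the decomposition $D^s u = Ju + D^c u$ of the singular part into jump and Cantor components, combined with the fundamental diffuseness property of the Cantor part---namely, that $|D^c u|$ vanishes on every Borel set of $\sigma$-finite $\mathscr{H}^{n-1}$ measure. Once this diffuseness is granted, both implications reduce to short measure-theoretic manipulations.

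For the necessity, I would observe that $u \in SBV(\Omega)$ means $D^c u = 0$ by definition, hence $D^s u = Ju$. Now $Ju$ is concentrated, again by construction, on the approximate discontinuity set $S_u$, and the Federer--Vol'pert structure theorem for $BV$ functions asserts that $S_u$ is countably $\mathscr{H}^{n-1}$-rectifiable. In particular, $S_u$ is a Borel set of $\sigma$-finite $\mathscr{H}^{n-1}$ measure, which furnishes the concentration set required by the statement.

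For the sufficiency, suppose $D^s u$ is concentrated on a Borel set $E$ with $\mathscr{H}^{n-1}(E)$ $\sigma$-finite. Since $Ju$ and $D^c u$ are mutually singular parts of $D^s u$, we have $|D^c u| \leq |D^s u|$ as measures, and so $|D^c u|$ is likewise concentrated on $E$. The diffuseness property then forces $|D^c u|(E) = 0$, from which $D^c u \equiv 0$ on $\Omega$, i.e., $u \in SBV(\Omega)$.

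The main obstacle---and indeed the true content of the proposition---is the diffuseness of the Cantor part. Following the strategy in \cite{AFP}, one characterizes $D^c u$ as the residual of $D^s u$ once the atomic/jump contribution carried by $S_u$ has been removed, and then shows via a Besicovitch-type covering argument tailored to $(n-1)$-rectifiable sets that any finite Radon measure which is singular with respect to Lebesgue, non-atomic, and charges only a set of $\sigma$-finite $\mathscr{H}^{n-1}$ measure must be of pure jump type. Orthogonality of $D^c u$ with the jump structure then forces it to vanish, which closes the argument.
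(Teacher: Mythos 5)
Your argument is correct and is essentially the standard proof of Proposition~4.2 in \cite{AFP}, which the paper imports without proof: both implications reduce, exactly as you say, to the decomposition $D^su=Ju+D^cu$, the rectifiability of $S_u$ (Federer--Vol'pert), and the fact that $|D^cu|$ vanishes on Borel sets of $\sigma$-finite $\mathscr{H}^{n-1}$ measure. The only soft spot is your closing sketch of that last diffuseness property --- the precise mechanism is the density estimate showing that $|Du|$-a.e.\ point of a set of finite $\mathscr{H}^{n-1}$ measure has positive upper $(n-1)$-density of $|Du|$ and is therefore an approximate jump point, so that $|Du|$ restricted to such a set is carried by $S_u$ and cannot charge the Cantor part --- but this is a known lemma you are entitled to quote rather than reprove.
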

Thus, $u \in SBV(\Omega)$ if it vanishes outside a countably $\mathscr{H}^{n-1}$-rectifiable set.
\begin{definition}[Proximal normals]
Let $S$ be a closed subset of $\mathbb R^n$. 
A vector $v$ is called a \emph{proximal normal} to $S$ at $x \in S$  
if there exists $\sigma =\sigma(v,x)\geq 0$ such that 
\begin{equation}\label{eq:proxnor}
\langle v, y-x\rangle \leq \sigma \|y-x\|^2,\textrm{ for every $y \in S$}.
\end{equation}
The set of all proximal normals to $S$ at $x$ will be denoted by $N_S^P(x)$.

\medskip
Given $\rho>0$, we say that $v\in N_{S}^{P}(x)$ is \emph{realized by a ball with radius} $\rho$ if one can take $\sigma=\frac{\|v\|}{2\rho}$ in \eqref{eq:proxnor}. 
\end{definition}
Observe that  $x\in N_{Q}^{P}(x)$ if and only   $d_Q(x+\lambda v)=\lambda\|v\|$ for some $\lambda>0$.
Moreover, if $v\in N_{S}^{P}(x)$ is realized by a ball with radius $\rho$, then $B(x+\rho v,\rho)\cap S=\emptyset$.
\begin{remark}
We recall that, when $S$ is convex, we can take $\sigma=0$ in the above definition.
Hence, the proximal normal cone at $x$ reduces to the \emph{normal cone in the sense of convex analysis},
namely the set of vectors $v\in\mathbb R^n$ such that $\langle v, y-x \rangle \leq 0$ for all $y \in S$.
\end{remark}

\begin{definition}[External sphere condition]
Let $S\subset\mathbb{R}^n$ be closed and let $\theta:\partial S\rightarrow ]0,\infty[$ be continuous. 
We say that $S$ satisfies the \emph{$\theta$-external sphere condition} if for every $x\in\partial S$, 
there exists a nonzero vector $v\in N^P_Q(x)$ which is realized by a ball of radius $\theta(x)$.

We say that a closed subset $Q$ of $\mathbb R^n$ satisfies the \emph{$\theta$-internal sphere condition}
if and only if $\overline{\mathbb R^n\setminus Q}$ satisfies the $\theta$-external sphere condition.
\end{definition}

\begin{definition}[Proximal supergradients]
Let $f:\mathbb{R}^n\to [-\infty,+\infty]$ be an upper semicontinuous function.
For every $x\in\mathrm{dom}(f)$, we denote by 
\begin{equation}
\partial^Pf(x)\doteq\left\{\xi\in\mathbb{R}^n:\,(-\xi,1)\in N^P_{\mathrm{hypo}(f)}(x,f(x))\right\},
\end{equation}
the set of \emph{proximal supergradients} of $f$ at $x$, and by  
\begin{equation}
\partial^{\infty}f(x)\doteq\left\{\xi\in\mathbb{R}^n:\,(-\xi,0)\in N^P_{\mathrm{hypo}(f)}(x,f(x))\right\}.
\end{equation}
the set of \emph{horizontal proximal supergradients} of $f$ at $x$.
\end{definition}
We recall that, if $f$ is Lipschitz continuous at $x$, then $\partial^{\infty}f(x)$ is empty. The converse in general is false.
\begin{definition}[Support function]
Let $C\subseteq\mathbb R^n$ be nonempty. The \emph{support function} $\sigma_C:\mathbb R^n\to]0,+\infty]$ is defined as
$\sigma_C(p)=\displaystyle\sup_{v\in C}\langle p,v\rangle$. 
\end{definition}
It can be proved that $\sigma_C(p)=\sigma_{\overline{\mathrm{co}\,C}}(p)$.

Let $\Omega$ be an open subset of $\mathbb R^n$, let $x_0\in \Omega$, and let $f:\Omega\to\mathbb R$ be Lipschitz continuous at $x_0$.
\begin{definition}[Clarke's generalized gradient]
We define \emph{Clarke's generalized gradient of $f$ at $x_0$} by setting
\[\partial f(x)=\mathrm{co}\left\{v\in\mathbb R^n:\, \exists\{y_i\}_{i\in\mathbb N}\subset\Omega\textrm{ s.t. }f\textrm{ is differentiable at }y_i,\,\begin{array}{l}y_i\to x_0\\\nabla f(y_i)\to v\end{array}\right\}.\]
\end{definition}

We now recall  the definition and some properties of semiconcave functions, referring the reader to \cite{CS} for further properties and characterizations. Let $\Omega$ be an open subset of $\mathbb R^n$ and let $c\in\mathbb R$. 
\begin{definition}[Semiconcave functions]
We say that  $f:\Omega\to\mathbb R$ is \emph{semiconcave in $\Omega$ with 
semiconcavity constant $c_0>0$} if $f$ is continuous and, for every $x_1,x_2\in\Omega$ such that the line segment $\{\lambda x_1+(1-\lambda)x_2:\,\lambda\in[0,1]\}$ is contained in $\Omega$,
we have
\[\dfrac{f(x_1)+f(x_2)}2-f\left(\dfrac{x_1+x_2}2\right)\le \dfrac{c_0}4\|x_1-x_2\|^2.\]
In this case we will write $f\in SC(\Omega)$. We say that $f$ is \emph{locally semiconcave in $\Omega$}, and write $f\in LSC(\Omega)$, if $f\in SC(A)$
for every $A\subset \Omega$, $A$ open and bounded.
We say that $g:\Omega\to\mathbb R$ is semiconvex iff $-g$ is semiconcave.
\end{definition}

\begin{proposition}[Properties of semiconcave functions]
Let $\Omega$ be an open subset of $\mathbb R^n$, let $c\in\mathbb R$, and let $f:\Omega\to\mathbb R$ be continuous.
Then the following are equivalent:
\begin{enumerate}
\item $f\in SC(\Omega)$ with semiconcavity constant $c$;
\item $f(y)-f(x)\le \langle \xi,y-x\rangle +c\|y-x\|^2$ for all $\xi \in \partial^P f(x)$ and $y\in\Omega$. 
\end{enumerate}
\end{proposition}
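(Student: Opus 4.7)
My plan is to reduce the equivalence to concavity of the continuous auxiliary function
\[
g(x)\;:=\;f(x)-c\|x\|^{2}
\]
on any open convex subset $\Omega'\subseteq\Omega$. The two algebraic facts that drive the reduction are the elementary identity
\[
\Bigl\|\tfrac{x_{1}+x_{2}}{2}\Bigr\|^{2}-\tfrac{\|x_{1}\|^{2}+\|x_{2}\|^{2}}{2}\;=\;-\tfrac{\|x_{1}-x_{2}\|^{2}}{4},
\]
and the translation rule $\partial^{P}g(x)=\partial^{P}f(x)-2cx$, which is immediate from the proximal supergradient definition because $x\mapsto c\|x\|^{2}$ is $C^{1,1}$ with gradient $2cx$.

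For (1)$\Rightarrow$(2), I would first rewrite the midpoint inequality of the definition in terms of $g$; a short computation using the identity shows that semiconcavity of $f$ with constant $c$ is equivalent to midpoint concavity of $g$, i.e.\ $g((x_{1}+x_{2})/2)\ge(g(x_{1})+g(x_{2}))/2$. Since $f$ (hence $g$) is continuous, midpoint concavity upgrades to full concavity on each convex open subset of $\Omega$ by the classical dyadic-approximation argument. For a continuous concave function the proximal superdifferential agrees with the convex-analysis superdifferential, so every $\eta\in\partial^{P}g(z)$ gives the global linear bound $g(y)\le g(z)+\langle\eta,y-z\rangle$. Translating back via $\xi=\eta+2cz$ and using the identity exactly reproduces inequality (2).

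For (2)$\Rightarrow$(1), I would again pass to $g$: rewriting (2) with $\eta=\xi-2cx\in\partial^{P}g(x)$ and simplifying with the identity shows that (2) is equivalent to the \emph{global linear} supergradient estimate
\[
g(y)\;\le\;g(x)+\langle\eta,y-x\rangle,\qquad\forall\,\eta\in\partial^{P}g(x),\ \forall\,y\in\Omega'.
\]
The goal is then to deduce that $g$ is concave on $\Omega'$. The key ingredient is the classical density result that, for a continuous (hence upper semicontinuous) function, the set $\{x\in\Omega':\partial^{P}g(x)\neq\varnothing\}$ is dense in $\Omega'$ (a standard consequence of the Borwein--Preiss smooth variational principle). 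Combining this density with the global linear bound and the continuity of $g$ produces concavity of $g$. Finally, evaluating concavity at the midpoint of $[x_{1},x_{2}]$ and inverting the algebraic identity recovers the midpoint inequality that defines $SC(\Omega)$.

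The main obstacle is this last concavity step in (2)$\Rightarrow$(1): turning a linear proximal-supergradient estimate available only on a dense set into genuine concavity of $g$ requires care. A clean route is by contradiction: if concavity failed along some segment $[x_{1},x_{2}]\subset\Omega'$, then the gap function $t\mapsto g((1-t)x_{1}+tx_{2})-[(1-t)g(x_{1})+tg(x_{2})]$ would attain a strict interior minimum, and applying the proximal variational principle near that minimum would produce a proximal supergradient of $g$ whose associated global linear majorant contradicts the negativity of the gap. Alternatively, one can invoke a Clarke--Ledyaev-type mean value inequality for proximal supergradients to derive concavity directly.
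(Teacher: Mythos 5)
The paper does not actually prove this proposition: it is recalled as a standard fact and the reader is referred to \cite{CS} (cf.\ Proposition~3.3.1 there), so your argument can only be assessed on its own merits. It is correct, and it is in fact the classical route: reduce to concavity of $g=f-c\|\cdot\|^2$ via the identity $\bigl\|\tfrac{x_1+x_2}{2}\bigr\|^2-\tfrac{\|x_1\|^2+\|x_2\|^2}{2}=-\tfrac{\|x_1-x_2\|^2}{4}$, observe that for a continuous concave function every proximal supergradient is a supergradient in the sense of convex analysis (scale $y$ toward $x$ and let the quadratic error vanish), and for the converse combine the proximal density theorem with the global affine majorants. Your last step in (2)$\Rightarrow$(1), which you rightly flag as the delicate one, can be closed without any contradiction argument: letting $D$ be the dense set where $\partial^Pg\ne\varnothing$ and $\ell_x(y)=g(x)+\langle\eta_x,y-x\rangle$ the affine majorant attached to $x\in D$, the function $h:=\inf_{x\in D}\ell_x$ is concave and finite on the convex set $\Omega'$, hence continuous; since $g\le h$ everywhere and $g=h$ on the dense set $D$, continuity of $g$ gives $g=h$, so $g$ is concave. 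One caveat concerns the statement rather than your proof: as written, item (2) requires the inequality for \emph{all} $y\in\Omega$, and this is genuinely false when $\Omega$ is not convex (on a slit annulus, the angular coordinate is smooth with bounded Hessian, hence semiconcave, yet the supergradient inequality fails across the slit). The correct formulation, which is the one in \cite{CS}, restricts (2) to those $y$ with $[x,y]\subset\Omega$ (equivalently, works on convex subsets), and that is precisely the version your reduction to convex $\Omega'$ establishes; your proof is therefore sound, and the imprecision lies in the statement as transcribed in the paper.
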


\begin{definition}[Multifunctions]
Let $\Omega$ be a subset of $\mathbb R^n$. A map $F:\Omega\to \mathcal P(\mathbb R^m)$ will be called a \emph{multifunction}
or \emph{set-valued function}, and we will write $F:\Omega\rightrightarrows \mathbb R^m$.

We say that a multifunction $F:\Omega\rightrightarrows \mathbb R^m$ is:
\begin{enumerate}
\item[-] \emph{measurable}, if for all closed $C\subset\mathbb R^m$ we have that $\{x\in\Omega:\, F(x)\cap C\ne\emptyset\}$ is Lebesgue measurable;
\item[-] \emph{upper semicontinuous at $x_0\in\Omega$}, if for any open set $M$ containing $F(x_0)$ there exists an open set $A\subset\Omega$ with $x_0\in A$ such that
\[F(A):=\bigcup_{a\in A}F(a)\subseteq M;\]
\item[-] \emph{lower semicontinuous at $x_0\in\Omega$}, if for any $y_0\in F(x_0)$ and any open set $M$ containing $y_0$ there exists an open set $A\subset\Omega$ with $x_0\in A$ such that
$F(x)\cap M\ne\emptyset$;
\item[-] \emph{continuous at $x_0\in\Omega$}, if $F$ is both lower and upper semicontinuous at $x_0$;
\item[-] \emph{Lipschitz continuous}, if there exists $K>0$, called a Lipschitz constant of $F$, such that for any $x_1,x_2\in\Omega$ we have $F(x_2)\subseteq F(x_1)+K\|x_1-x_2\|\mathbb B^m$.
\end{enumerate}
\end{definition}

\begin{definition}[Differential inclusions]
Let $F:\mathbb R^n\rightrightarrows \mathbb R^n$ be a measurable multifunction. A \emph{solution} or \emph{trajectory} of the 
\emph{differential inclusion}
\begin{equation}\label{eq:diffinc}
\begin{cases}
\dot x(t)\in F(x(t)),\\
x(t_0)=x_0\in\mathbb R^n,
\end{cases} 
\end{equation}
is an absolutely continuous map $y^{x_0}:I\to\mathbb R^n$ such that $I$ is an open interval containing $t_0$, $y^{x_0}(t_0)=x_0$, and $\dot y^{x_0}(t)\in F(t,x(t))$
for a.e. $t\in I$.
\end{definition}
We refer the reader to the monograph \cite{AC} for further properties and results on multifunctions and differential inclusions. 

\begin{definition}[Minimum time function]\label{def:mintime}
Let $F:\mathbb R^n\rightrightarrows \mathbb R^n$ be a measurable multifunction,
and let $\mathcal S$ be a given closed subset of $\mathbb R^n$.
We define the \emph{minimum time function} $T:\mathbb R^n\to]0,+\infty]$ by setting:
\begin{equation}\label{eq:mintime}
T(x)=\inf\left\{\tau>0:\, \exists y^{x}(\cdot)\textrm{ satisfying }\dot x(t)\in F(x(t)),\;x(0)=x, \;y^x(\tau)\in S\right\}
\end{equation}
with the usual  convention $\inf\emptyset=+\infty$.
\end{definition}
The \emph{reachable} (or \emph{controllable}) set is defined as $\mathcal R=\mathrm{dom}(T)$.

\section{Standing hypothesis and first consequences}\label{sec:stand}
From now on, we will consider $t_0=0$ in the differential inclusion \eqref{eq:diffinc}, 
and the associated minimum time function $T$ defined in \eqref{eq:mintime}.

\begin{definition}[Hamiltonian]
The \emph{Hamiltonian} $H:\mathbb R^n\times\mathbb R^n\to\mathbb R$ associated with $F$ is given by
\begin{equation}\label{eq:Hamiltonian}H(x,p)=\ \sup_{v\in F(x)}\langle p,v\rangle\,.\end{equation}
\end{definition}
In other terms, $H(x,p)$ is the support function to the set $F(x)$, evaluated at $p$.

\begin{definition}[Hypothesis (F)]\label{def:hypothesisF}
We say that the hypothesis (F) is satisfied by  system \eqref{eq:diffinc} if the following holds true:
\begin{enumerate}
\item[(F1)] $F(x)$ is nonempty, convex, and compact for each $x\in\mathbb{R}^n$;
\item[(F2)] $F$ is Lipschitz continuous.
\end{enumerate}
\end{definition}

\begin{definition}[Hypothesis (H)]\label{def:hypothesisH}
We say that the hypothesis (H) is satisfied by  system \eqref{eq:diffinc} if the following holds true:
\begin{enumerate}
\item[(H1)] There exists a constant $c_0\geq 0$ such that, for every $p\in\mathbb R^n$,  $x\mapsto H(x,p)$ is semiconvex with semiconvexity constant $c_0|p|$.
\item[(H2)] For all $p\neq 0$, the gradient $\nabla_pH(\cdot,p)$ exists and is globally Lipschitz continuous, i.e. there exists $K_1\ge 0$ such that
\begin{equation}\label{eq:global}
\|\nabla_pH(x,p)-\nabla_pH(y,p)\|\ \leq\ K_1 \|y-x\|,\hspace{1cm}\textrm{for all }x,y\in\mathbb R^n\,, p\in\mathbb R^n\setminus\{0\}.
\end{equation}
\end{enumerate}
\end{definition}

\begin{remark}\label{rem:cons}
If hypothesis (F) hold, and $K$ is a Lipschitz constant for $F$, then for all $v\in F(x_2)$ there exists $\eta\in \mathbb B^n$, $\rho\in[0,1]$, $v'\in F(x_1)$ such that we have
$v=v'+K\rho\eta\|x_1-x_2\|$. Moreover, $H(x,p)\in\mathbb R$ for every $x,p\in\mathbb R^n$.
In particular,
\begin{align*}\langle p,v\rangle&=\langle p,v'\rangle+K\rho\|x_1-x_2\|\langle\eta,p\rangle\\ 
&\le \sup_{w\in F(x_1)}\langle p,w\rangle+K\|p\|\cdot\|x_1-x_2\|=H(x_1,p)+K\|p\|\cdot\|x_1-x_2\|.\end{align*}
By passing to the supremum on $v\in F(x_2)$, we have
\[H(x_2,p)-H(x_1,p)\le K\|p\|\cdot\|x_1-x_2\|,\]
whence, reversing the role of $x_1$ and $x_2$, we end up with
\begin{equation}\label{eq:H-Lipschitz}
|H(x_1,p)-H(x_2,p)| \leq\ K\|p\|\cdot\|x_1-x_2\|,
\end{equation}
for all $x_1,x_2,p\in\mathbb R^n$.
Take now $x_1=x$, $x_2=0$, and let $v\in F(x)$ be such that $v\ne 0$ and $\|v\|=\displaystyle\max_{w\in F(x)}\|w\|$. Set $p=v/\|v\|$.
Then we have 
\[\|v\|=\displaystyle\max_{w\in F(x)}\|w\|=H(x,p)\le K\|x\|+H(0,p)\le K\|x\|+\max_{w\in F(0)}\|w\|.\] 
So, there exists $K_2>0$ such that if $F(x)\ne \{0\}$ it holds:
\begin{equation}\label{extra}
\max_{w\in F(x)}\|w\|\le K_2(1+\|x\|).
\end{equation}
Since the above inequality still holds even if $F(x)=\{0\}$,  it holds for every $x\in\mathbb R^n$.
\end{remark}

\begin{remark}
Global Lipschitz continuity in both (F2) and (H2) was assumed just to simplify computations. Indeed, our results still hold if $F$ is locally Lipschitz with respect to the Hausdorff distance, and $\nabla_pH(\cdot,p)$ is locally Lipschitz in $x$, uniformly so over $p$ in $\mathbb{R}\backslash \{0\}$. In that case, however, we need to assume (\ref{extra}) as an extra condition.
\end{remark}
We collect now, in the following, some consequences of assumptions (F) and (H):

\begin{proposition}\label{prop:consSA}
Suppose that $F$ and $H$ satisfy hypotheses (F) and (H). 
\begin{enumerate}
\item [(i)] For every $p\neq 0$ and $x\in\mathbb{R}^n$, it holds
\[\partial H(x,p)\subseteq \partial_xH(x,p)\times\partial_pH(x,p),\]
where $\partial_x$ and $\partial_p$ denote the Clarke's generalized gradient with respect to $x$ variables and $p$ variables, respectively. 
\item[(ii)] For every $p\neq 0$, if we set $F_p(x)=\nabla_p H(x,p)$, then $F_p(x)\in F(x)$ and 
$\left\langle F_p(x),p\right\rangle=H(x,p)$. 
Moreover, $F_p(\cdot)$ is Lipschitz continuous with Lipschitz constant $K_1$, i.e.,
\[\|F_p(y)-F_p(x)\|\leq K_1\cdot \|y-x\|,\quad\forall x,y\in\mathbb{R}^n.\]  
\item[(iii)] For every $\xi\in\partial_xH(x,p)$, it holds
\[H(y,p)-H(x,p)-\langle\xi,y-x\rangle\geq -c_0\cdot\|p\|\cdot\|y-x\|^2,\quad\forall x,y\in\mathbb{R}^n.\]
\end{enumerate}
\end{proposition}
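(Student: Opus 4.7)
My plan is to establish the three assertions separately, each following from a classical fact once the right object is identified.

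For part (i), I would first observe that $H$ is locally Lipschitz on $\mathbb{R}^n\times(\mathbb{R}^n\setminus\{0\})$: the Lipschitz bound $|H(x_1,p)-H(x_2,p)|\leq K\|p\|\cdot\|x_1-x_2\|$ from Remark~\ref{rem:cons} controls the $x$-variable, while for each fixed $x$ the map $p\mapsto H(x,p)=\sigma_{F(x)}(p)$ is convex and positively homogeneous of degree one, hence Lipschitz on bounded sets with constant $\max_{v\in F(x)}\|v\|$ (which in turn is bounded by (\ref{extra})). The product inclusion $\partial H(x,p)\subseteq\partial_xH(x,p)\times\partial_pH(x,p)$ is then a standard consequence of the limit-of-gradients characterization of Clarke's generalized gradient for Lipschitz functions: every extreme element of $\partial H(x,p)$ arises as a limit $(\nabla_xH(x_i,p_i),\nabla_pH(x_i,p_i))$ at points of joint differentiability, each coordinate lies in the corresponding partial Clarke subdifferential, and taking convex hulls preserves the product structure.

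For part (ii), I would invoke the duality theory for compact convex sets: since $F(x)\subset\mathbb{R}^n$ is convex and compact, its support function $\sigma_{F(x)}=H(x,\cdot)$ is differentiable at a point $p\neq 0$ if and only if the linear functional $v\mapsto\langle p,v\rangle$ attains its supremum over $F(x)$ at a unique point, in which case $\nabla_p\sigma_{F(x)}(p)$ is precisely that maximizer. Assumption (H2) guarantees differentiability for all $p\neq 0$, so $F_p(x)\in F(x)$ and $\langle F_p(x),p\rangle=H(x,p)$ follow at once, while the Lipschitz estimate for $F_p$ with constant $K_1$ is exactly what (H2) asserts.

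For part (iii), I would use (H1): since $x\mapsto H(x,p)$ is semiconvex with constant $c_0\|p\|$, the function $x\mapsto -H(x,p)$ is semiconcave with the same constant. By the Proposition on semiconcave functions recalled in Section~\ref{sec:prel}, for every $\eta\in\partial^P(-H(\cdot,p))(x)$ one has
\[
-H(y,p)+H(x,p)\leq\langle\eta,y-x\rangle+c_0\|p\|\cdot\|y-x\|^2.
\]
For semiconcave functions, Clarke's generalized gradient coincides with the proximal superdifferential, and $\partial_xH(x,p)=-\partial^P(-H(\cdot,p))(x)$; substituting $\xi=-\eta$ and rearranging yields the claim.

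The arguments for (i) and (ii) amount to bookkeeping with Clarke's calculus and convex analysis. The only point in (iii) that requires some care is the identification of Clarke's generalized gradient with the (negative of the) proximal subdifferential for a semiconvex function; I would justify this either by citing the standard result that a semiconvex function is locally the sum of a convex function and a smooth quadratic, so all reasonable subdifferentials collapse to the convex one, or by a direct argument combining the limit-of-gradients description of $\partial_x H$ with the uniform one-sided quadratic estimate delivered by semiconvexity.
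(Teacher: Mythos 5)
Your argument is correct. Note that the paper gives no proof of this proposition at all---it simply cites Proposition~1 of \cite{CaW}---so there is no in-text argument to compare against; what you have written is essentially the standard proof that the cited reference supplies. Part (i) is Clarke's product rule for generalized gradients of jointly Lipschitz functions; your verification of joint local Lipschitz continuity of $H$ via \eqref{eq:H-Lipschitz} together with the bound \eqref{extra} is the only input needed, and the limit-of-gradients argument you sketch is the right one, the key technical point being the closedness of the graphs of $(x,p)\mapsto\partial_xH(x,p)$ and $(x,p)\mapsto\partial_pH(x,p)$, which holds because these are Clarke gradients of Lipschitz functions. Part (ii) is exactly the duality $\partial\sigma_C(p)=\arg\max_{v\in C}\langle p,v\rangle$ for a compact convex set $C$, plus a literal reading of (H2). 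For part (iii), the one step that genuinely needs justification is the identification $\partial_xH(x,p)=-\partial^P\bigl(-H(\cdot,p)\bigr)(x)$ for the semiconvex function $H(\cdot,p)$; your proposed justification (locally, a semiconvex function with linear modulus is a convex function plus a smooth quadratic, so the Clarke, proximal, and convex subdifferentials all coincide and the one-sided quadratic estimate holds at every point) is standard and correct. The only cosmetic caveat is the usual factor-of-two ambiguity between the midpoint definition of semiconvexity and the one-sided quadratic inequality; the paper itself glosses over this in its Proposition on properties of semiconcave functions, so your statement is consistent with the conventions in force.
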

\begin{proof}
See Proposition 1 in \cite{CaW}.
\end{proof}

\begin{proposition}
Assume that $F$ and $H$ satisfy assumptions (F) and (H). Let $T>0$, $p(\cdot):[0,T]\rightarrow\mathbb{R}^n$ be an 
absolutely continuous arc such that $p(t)\neq 0$ for all $t\in [0,T]$. Given $x_0\in\mathbb{R}^n$, then the problem
\begin{equation}\label{IVP}
\begin{cases}
\dot{x}(t)= F_{p(t)}(x(t)),& \textrm{ for a.e. }t\in [0,T],\vspace{.3cm}\\ x(0)=x_0,
\end{cases}
\end{equation}
admits a unique solution $y(\cdot,x_0)$. Moreover, for all $t>0$, the following holds:
\begin{enumerate}
\item [(i)] $x_0\mapsto y(t,x_0)$ is Lipschitz continuous on $\mathbb{R}^n$ and 
\[\|y(t,z_0)-y(t,x_0)\|\leq e^{Kt}\cdot \|z_0-x_0\|,\quad\forall z_0\in\mathbb{R}^n,\]
\item[(ii)] $\|y(t,x_0)\|\leq (\|x_0\|+1)\cdot e^{K_2t}-1$,
\item[(iii)] $\|y(t,x_0)-x_0\|\leq K_2\cdot(\|x_0\|+1)e^{K_2t}t$,
\end{enumerate}
where $K_2$ is the constant in Remark \ref{rem:cons}.
\end{proposition}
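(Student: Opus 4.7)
The plan is to recognize that the right-hand side $f(t,x) := F_{p(t)}(x) = \nabla_p H(x,p(t))$ of \eqref{IVP} satisfies a Carath\'eodory-type condition with the nice structural bounds inherited from Proposition~\ref{prop:consSA} and Remark~\ref{rem:cons}, then to run Picard--Lindel\"of plus two Gronwall estimates.

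First, I would verify that \eqref{IVP} has a unique solution on $[0,T]$. Since $p(\cdot)$ is absolutely continuous and never vanishes on $[0,T]$, it is continuous with $\inf_{[0,T]}\|p(t)\|>0$; combined with the continuity of $(x,p)\mapsto \nabla_p H(x,p)$ away from $p=0$, this makes $t\mapsto F_{p(t)}(x)$ measurable for each fixed $x$. By Proposition~\ref{prop:consSA}(ii), $x\mapsto F_{p(t)}(x)$ is Lipschitz with constant $K_1$ uniformly in $t$, and by \eqref{extra} applied to $F_p(x)\in F(x)$ we get the sublinear bound $\|F_{p(t)}(x)\|\le K_2(1+\|x\|)$. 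A standard Carath\'eodory existence/uniqueness argument therefore produces a unique absolutely continuous solution $t\mapsto y(t,x_0)$ defined on all of $[0,T]$.

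Next, I would prove (i) by writing, for any $x_0,z_0\in\mathbb R^n$,
\[
y(t,z_0)-y(t,x_0) \;=\; (z_0-x_0) + \int_0^t \bigl[F_{p(s)}(y(s,z_0))-F_{p(s)}(y(s,x_0))\bigr]\,ds,
\]
taking norms, applying the uniform Lipschitz bound $K_1$ (the constant the statement calls $K$), and invoking Gronwall to conclude $\|y(t,z_0)-y(t,x_0)\|\le e^{K_1 t}\|z_0-x_0\|$. For (ii), the integral form
\[
\|y(t,x_0)\| \;\le\; \|x_0\| + \int_0^t K_2\bigl(1+\|y(s,x_0)\|\bigr)\,ds
\]
rewritten for $\phi(t):=1+\|y(t,x_0)\|$ yields $\phi(t)\le (1+\|x_0\|)+K_2\int_0^t \phi(s)\,ds$, and Gronwall gives $\phi(t)\le (1+\|x_0\|)e^{K_2 t}$, i.e.\ $\|y(t,x_0)\|\le (\|x_0\|+1)e^{K_2 t}-1$. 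Finally, (iii) follows by plugging the bound (ii) into the integral of the velocity:
\[
\|y(t,x_0)-x_0\| \;\le\; \int_0^t K_2\bigl(1+\|y(s,x_0)\|\bigr)\,ds \;\le\; \int_0^t K_2(\|x_0\|+1)e^{K_2 s}\,ds \;\le\; K_2(\|x_0\|+1)\,e^{K_2 t}\,t.
\]

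The only subtle point is the well-posedness of \eqref{IVP}: Proposition~\ref{prop:consSA}(ii) provides a Lipschitz constant in $x$ only pointwise in $p\neq 0$, so one must use the hypothesis $p(t)\neq 0$ on the whole interval $[0,T]$ to transfer this to a time-uniform estimate and to ensure the measurability of $f(t,x)$ on $[0,T]\times\mathbb R^n$. Once this is settled, the three estimates are standard Gronwall manipulations.
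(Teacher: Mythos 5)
Your proof is correct and follows essentially the same route as the paper, which simply defers to Gronwall-based arguments in \cite{CK} (Proposition 3.4, Lemmas 3.5 and 3.7): uniform-in-$t$ Lipschitz dependence of $F_{p(t)}(\cdot)$ on $x$ via Proposition \ref{prop:consSA}(ii), the sublinear bound \eqref{extra}, and three Gronwall estimates. The only cosmetic point is the naming of the constant in (i), where your $K_1$ plays the role the statement calls $K$.
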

\begin{proof}
The proof is based on Gronwall's inequaliy, see Proposition 3.4, Lemma 3.5, and Lemma 3.7 in \cite{CK}.
\end{proof}

The following statement allows us to make local approximation of trajectories of \eqref{eq:diffinc} with smooth ones:

\begin{proposition}\label{prop:regtraj}
Assume $F$ and $H$ satisfy hypotheses (F) and (H). Let $\mathcal{K}\subset\mathbb{R}^n$ be compact and let $\delta>0$ be given. 
Set
\[M=\sup\left\{\|v\|:\, v\in F (x),\,x\in\mathcal{K}+\delta B(0,1)\right\},\hspace{1cm}\delta_1=\dfrac{\delta}{M+1}.\]
Then for each $x_0\in\mathcal{K}$, and $v\in F(x_0)$ there exists a $C^1$ trajectory $y^{x_0}(\cdot)$ of \eqref{eq:diffinc} such that 
\[\|\dot{y}^x_0(t)-v\|\leq KMt,\quad\forall t\in [0,\delta_1].\]
\end{proposition}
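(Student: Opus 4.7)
My plan is to construct the desired trajectory $y^{x_0}(\cdot)$ as a $C^1$ solution of the implicit ordinary differential equation
\[
\dot y(t) = g(y(t)), \qquad y(0) = x_0,
\]
where $g:\mathbb R^n \to \mathbb R^n$ is defined by $g(z) := \pi_{F(z)}(v)$, the unique projection of the fixed vector $v$ onto the nonempty closed convex set $F(z)$ provided by (F1). By construction $g(z) \in F(z)$, so any solution is automatically a trajectory of \eqref{eq:diffinc}; and since $v \in F(x_0)$, one has $g(x_0) = v$ and hence $\dot y(0) = v$.

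The first substantive step is to verify that $g$ is continuous on $\mathbb R^n$. I would base this on the standard fact that, if $(C_n)$ is a sequence of nonempty closed convex sets converging to $C$ in Hausdorff distance, then $\pi_{C_n}(v) \to \pi_{C}(v)$. A brief subsequence argument does the job: $\pi_{C_n}(v)$ is bounded, every limit point belongs to $C$, and the identity $\|v - \pi_{C_n}(v)\| = \mathrm{dist}(v, C_n) \to \mathrm{dist}(v, C)$ together with the uniqueness of the projection onto the convex set $C$ forces every such limit point to equal $\pi_{C}(v)$. Composing with the Hausdorff-Lipschitz multifunction $z \mapsto F(z)$ from (F2) then yields continuity of $g$.

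By Peano's existence theorem, there exists a $C^1$ solution $y:[0,t^{*}) \to \mathbb R^n$ of $\dot y = g \circ y$ with $y(0) = x_0$ on some maximal interval. To show that $t^{*} \ge \delta_1$, I would observe that as long as $y(t) \in \mathcal K + \delta \mathbb B^n$ the definition of $M$ forces $\|\dot y(t)\| = \|\pi_{F(y(t))}(v)\| \le M$, so $\|y(t) - x_0\| \le Mt$. For $t \in [0,\delta_1]$, the bound $M t \le M\delta/(M+1) < \delta$ guarantees $y(t) \in x_0 + \delta \mathbb B^n \subseteq \mathcal K + \delta \mathbb B^n$, so the solution never exits this compact region and extends to all of $[0,\delta_1]$.

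The error estimate then falls out of the Lipschitz continuity of $F$: rewriting (F2) as $F(x_0) \subseteq F(y(t)) + K \|y(t) - x_0\| \mathbb B^n$, the vector $v \in F(x_0)$ is within distance $K \|y(t) - x_0\|$ of $F(y(t))$, so
\[
\|\dot y(t) - v\| = \mathrm{dist}(v, F(y(t))) \le K \|y(t) - x_0\| \le K M t,
\]
which is the claimed bound. I expect the main technical ingredient to be the Hausdorff continuity of the projection map; once that is established, the rest is a routine combination of Peano's theorem with an elementary a priori estimate.
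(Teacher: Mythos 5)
Your argument is correct: the construction $\dot y(t)=\pi_{F(y(t))}(v)$ with $y(0)=x_0$, the continuity of the convex projection under Hausdorff convergence of the sets (which together with (F2) makes the right-hand side continuous, so Peano applies and the solution is $C^1$), the a priori bound $\|\dot y\|\le M$ keeping $y(t)$ in $x_0+\delta\mathbb B^n$ for $t\in[0,\delta_1]$, and the final estimate $\|\dot y(t)-v\|=d_{F(y(t))}(v)\le K\|y(t)-x_0\|\le KMt$ obtained from $F(x_0)\subseteq F(y(t))+K\|y(t)-x_0\|\mathbb B^n$ are all sound. The paper itself gives no proof, deferring to Proposition~2.3 of \cite{CaMW}, and your projection-based construction is essentially the standard argument used there.
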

\begin{proof}
See Proposition 2.3 in \cite{CaMW}.
\end{proof}

We recall the following result:

\begin{proposition}\label{prop:dai-0}
Assume that $F$ and $H$ satisfy (F) and (H). Suppose $x(\cdot)$ is an optimal trajectory 
starting from $x_0$ and $\xi\ne 0$ is a unit proximal normal vector to $\mathcal{S}$ at $x(T(x_0))$ so that 
$\mu\doteq H(x(T(x_0)),\xi)^{-1}\leq 0$. 
Then there exists an absolutely continuous arc $p:[0,T]\rightarrow\mathbb{R}^n$ satisfying
\begin{equation}\label{eq:Nsystem}
\begin{cases}\hspace{.3cm}
\dot{x}(t)\in F_{p(t)}(x(t))\vspace{.3cm}\\
-\dot{p}(t)\in \partial_{x}H(x(t),p(t))
\end{cases}
\quad \textrm{ a.e. }t\in [0,T(x_0)]
\end{equation}
and the transversality condition 
\[-p(T)=\mu\cdot\xi.\]
\end{proposition}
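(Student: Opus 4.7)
The plan is to derive Proposition~\ref{prop:dai-0} by a penalization/limit argument that reduces the minimum time problem to a Mayer problem, for which the dual arc system is known (e.g.\ from \cite{CaW,CaMW}), and then to identify the transversality multiplier using the scalar $\mu$.

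First I would exploit the proximal normal property: since $\xi\in N^P_{\mathcal S}(\bar x)$ with $\bar x=x(T(x_0))$, there is $\sigma\ge 0$ with $\langle\xi,y-\bar x\rangle\le \sigma\|y-\bar x\|^2$ for all $y\in\mathcal S$. Hence $\bar x$ maximizes the quadratic form $y\mapsto \langle\xi,y-\bar x\rangle-\sigma\|y-\bar x\|^2$ over $\mathcal S$. For $k\in\mathbb N$, I would then consider on $[0,T]$ (with $T=T(x_0)$) the penalized Mayer problem of minimizing, over trajectories of \eqref{eq:mfun} starting at $x_0$,
\[
J_k(y(\cdot))\;:=\;-\langle\xi,y(T)-\bar x\rangle+\sigma\|y(T)-\bar x\|^2+\tfrac{k}{2}\,d_{\mathcal S}^2(y(T)).
\]
Compactness of the trajectory set (Filippov--Wa\.zewski, using \eqref{extra}) yields a minimizer $y_k(\cdot)$; since $J_k(x(\cdot))=0$, $d_{\mathcal S}(y_k(T))\to 0$. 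Passing to a subsequence, $y_k$ converges uniformly to some admissible trajectory $y^*$, and the optimality of $x(\cdot)$ combined with the proximal normal inequality forces $y^*(T)=\bar x$.

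Next I would apply the maximum principle for the Mayer problem (Proposition 2.3 and the dual arc analysis of \cite{CaW,CaMW}) to $y_k(\cdot)$: there exists an absolutely continuous arc $p_k$ with $p_k(t)\ne 0$ satisfying
\[
\dot y_k(t)=\nabla_p H(y_k(t),p_k(t)),\qquad -\dot p_k(t)\in\partial_x H(y_k(t),p_k(t)),
\]
(existence and uniqueness of the adjoint arc being guaranteed by (H1)--(H2) and Proposition~\ref{prop:consSA}), with endpoint condition
\[
-p_k(T)\;=\;\xi-2\sigma\bigl(y_k(T)-\bar x\bigr)-\lambda_k\zeta_k,
\]
where $\lambda_k:=k\,d_{\mathcal S}(y_k(T))\ge 0$ and $\zeta_k$ is a selection of $\partial d_{\mathcal S}(y_k(T))$, so $\zeta_k\in N^P_{\mathcal S}(\pi_{\mathcal S}(y_k(T)))$.

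Finally I would pass to the limit $k\to\infty$. The semiconvexity estimate of Proposition~\ref{prop:consSA}(iii) together with Gronwall's inequality yields uniform bounds on $\|p_k\|_\infty$ and uniform lower bounds away from $0$ in a neighbourhood of $T$, so that (up to subsequence) $p_k\to p$ uniformly, $\dot p_k\rightharpoonup\dot p$ weakly in $L^1$, and by a standard closure result for differential inclusions with semiconvex Hamiltonian the pair $(x,p)$ satisfies \eqref{eq:Nsystem}. Extracting a further subsequence I can assume $\lambda_k\to\lambda\ge 0$ and $\zeta_k\to\zeta\in N^P_{\mathcal S}(\bar x)$, obtaining $-p(T)=\xi-\lambda\zeta$. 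Using the maximum condition $\langle p(T),\dot x(T^-)\rangle=H(\bar x,p(T))$ combined with the fact that $\dot x(T^-)\in F(\bar x)$ must point into $\mathcal S$ (so $\langle\xi,\dot x(T^-)\rangle\le 0$, giving $H(\bar x,\xi)\le 0$), one identifies the multipliers: $\zeta$ is parallel to $\xi$ and the normalization $\mu=H(\bar x,\xi)^{-1}$ forces $-p(T)=\mu\xi$, as claimed.

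The main obstacle I expect is in the limit step, namely preventing $p_k$ from degenerating to zero so that $\nabla_pH(y_k,p_k)$ stays well defined, and then showing the limiting transversality multiplier coincides with $\mu$ rather than an unrelated nonnegative scalar. This is precisely where the hypothesis $\mu=H(\bar x,\xi)^{-1}\le 0$ (i.e.\ $H(\bar x,\xi)<0$) is used: it guarantees the rescaled adjoint remains bounded away from $0$ at $t=T$, which in turn (via continuity of $H$ and positive homogeneity in $p$) pins down the multiplier uniquely as $\mu$.
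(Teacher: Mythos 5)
The paper does not actually prove this proposition---it cites Theorem~2.1 of \cite{CaMW}, whose argument attaches the dual arc to the \emph{given} trajectory $x(\cdot)$ by exhibiting $x(\cdot)$ itself as a minimizer of a suitable Mayer problem and then applying the Mayer dual-arc theorem along that very trajectory. Your penalization scheme instead derives the adjoint system along the minimizers $y_k(\cdot)$ of $J_k$, and this is where the argument breaks down: nothing forces $y_k(\cdot)$, or its limit $y^*(\cdot)$, to coincide with $x(\cdot)$. Already the claim that ``$y^*(T)=\bar x$'' fails: the limit only gives $y^*(T)\in\mathcal S$ together with equality in the proximal inequality $\langle\xi,y^*(T)-\bar x\rangle\le\sigma\|y^*(T)-\bar x\|^2$, which many points of $\mathcal S$ may satisfy; and even if $y^*(T)=\bar x$, the arc $y^*$ need not equal $x$ on $[0,T)$, since several trajectories from $x_0$ may reach $\bar x$ at time $T$. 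Because the conclusion \eqref{eq:Nsystem} requires $\dot x(t)=F_{p(t)}(x(t))=\nabla_pH(x(t),p(t))$ for the \emph{given} $x(\cdot)$, an adjoint arc attached to a different trajectory proves nothing. Note also that the time-optimality of $x(\cdot)$ is never used in your construction (only $x(T)\in\mathcal S$ is), while the conclusion clearly encodes it; this is a symptom of the Mayer reduction having been set up with the wrong cost.

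The second gap is the transversality condition. Your limit produces $-p(T)=\xi-\lambda\zeta$ with $\lambda\ge 0$ and $\zeta$ a limit of normals at the projection points $\pi_{\mathcal S}(y_k(T))$; there is no argument that $\zeta$ is parallel to $\xi$, and even if it were, the construction contains no mechanism selecting the specific factor $\mu=H(\bar x,\xi)^{-1}$---a fixed-horizon Mayer problem carries no normalization of $H$ at the endpoint, whereas the factor $\mu$ is exactly the normalization coming from the free terminal time. The closing step is moreover circular and invalid: $H(\bar x,\xi)^{-1}\le 0$ is the hypothesis, not something to be derived, and $\langle\xi,\dot x(T^-)\rangle\le 0$ does not imply $H(\bar x,\xi)=\sup_{v\in F(\bar x)}\langle\xi,v\rangle\le 0$. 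Finally, the nondegeneracy $p_k(t)\ne 0$---needed merely to define $\nabla_pH(y_k,p_k)$---is asserted but never established, and since $-p_k(T)=\xi-2\sigma(y_k(T)-\bar x)-\lambda_k\zeta_k$ can tend to $0$ by cancellation between $\xi$ and $\lambda_k\zeta_k$, this is not a technicality that can be waved away; you flag it yourself as the main obstacle, but the hypothesis $H(\bar x,\xi)<0$ is invoked only rhetorically and never enters any estimate.
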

\begin{proof}
See Theorem 2.1 in \cite{CaMW}.
\end{proof}

\section{Main results}\label{sec:main}

\subsection{The dual arc inclusion}
\begin{theorem}\label{thm:dai} 
Assume that $F$ and $H$ satisfy (F) and (H). Let $x\in\mathbb{R}^n\backslash\mathcal{S}$, let
$x(\cdot)$ be an optimal trajectory which starts from $x$ and reaches $\mathcal{S}$ in time $T(x)$, and 
let $\xi$ be a unit proximal inner normal to $\mathcal{S}$ at $\bar{x}=x(T(x))$. Consider the system:
\begin{equation}\label{eq:Couple-S}
\begin{cases}
\hspace{.3cm}\dot{x}(t)=F_{p(t)}(x(t)),\vspace{.3cm}\\ 
-\dot{p}(t)\in\partial_xH(x(t),p(t))
\end{cases}
\quad \text{for a.e.} \;t\in [0,T(x)].
\end{equation}
Then:
\begin{enumerate}
\item[(i)] if $H(\bar{x},\xi)\ne 0$ and we couple system \eqref{eq:Couple-S} with the condition $p(T(x))=\dfrac{\xi}{H(\bar x,\xi)}$, we have $-p(t)\in\partial^PT(x(t))$ for all $t\in[0,T(x))$,
\item[]
\item[(ii)] if $H(\bar{x},\xi)=0$ and we couple system \eqref{eq:Couple-S} with the condition $p(T(x))=\xi$, we have $-p(t)\in\partial^{\infty}T(x(t))$ for all $t\in [0,T(x))$.
\end{enumerate}
\end{theorem}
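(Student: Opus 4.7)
The plan is to obtain the costate $p(\cdot)$ from Proposition~\ref{prop:dai-0}, and then propagate the quadratic proximal normal inequality satisfied at $t=T(x)$ backward along the Hamiltonian flow to all earlier times. The transport will be controlled by the semiconvexity of $H(\cdot,p)$ in the state variable: it guarantees that $\langle p(s),y(s)-x(s)\rangle$ varies only by a quadratic error when $y(\cdot)$ denotes the flow issuing from a point near $x(t)$. At the endpoint, a Petrov-type local entry lemma then converts this into the desired supergradient (resp.\ horizontal supergradient) estimate for $T$.

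First I apply Proposition~\ref{prop:dai-0}, after reconciling the inner/outer normal sign convention, to obtain an absolutely continuous arc $p:[0,T(x)]\to\mathbb{R}^n$ solving \eqref{eq:Couple-S} with the prescribed terminal value. Since $\partial_x H(x,\cdot)$ inherits the positive $1$-homogeneity of $H(x,\cdot)$, the adjoint equation gives $\|\dot p(s)\|\le C\|p(s)\|$ and Gronwall delivers $\|p(s)\|\ge e^{-C(T(x)-s)}\|p(T(x))\|>0$ on the whole interval, so $F_{p(s)}(\cdot)=\nabla_p H(\cdot,p(s))$ is well-defined everywhere.

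Next, fix $t\in[0,T(x))$ and $y_0$ near $x(t)$, and let $y(\cdot)$ solve $\dot y(s)=F_{p(s)}(y(s))$ with $y(t)=y_0$. Using $\langle p,F_p(z)\rangle=H(z,p)$ from Proposition~\ref{prop:consSA}(ii),
\[
\frac{d}{ds}\langle p(s),y(s)-x(s)\rangle=\langle\dot p(s),y(s)-x(s)\rangle+H(y(s),p(s))-H(x(s),p(s)).
\]
Hypothesis (H1) together with $-\dot p(s)\in\partial_x H(x(s),p(s))$ bounds the right-hand side below by $-c_0\|p(s)\|\cdot\|y(s)-x(s)\|^2$. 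Combined with the Lipschitz estimate $\|y(s)-x(s)\|\le e^{K_1(s-t)}\|y_0-x(t)\|$ and integration on $[t,T(x)]$, this yields the key backward comparison
\[
\langle p(T(x)),y(T(x))-\bar x\rangle\ge\langle p(t),y_0-x(t)\rangle-C\|y_0-x(t)\|^2.
\]

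For case (i), substituting $p(T(x))=\xi/H(\bar x,\xi)$ turns the left-hand side into $\langle\xi,y(T(x))-\bar x\rangle/H(\bar x,\xi)$. A local Petrov-type lemma, built by approximating the admissible velocity $\nabla_pH(\bar x,\xi)\in F(\bar x)$ via Proposition~\ref{prop:regtraj} and exploiting both $\langle\xi,\nabla_pH(\bar x,\xi)\rangle=H(\bar x,\xi)>0$ and the quadratic proximal inner-normal inequality for $\xi$ at $\bar x$, produces an admissible trajectory that reaches $\mathcal S$ from $y(T(x))$ in time at most $\tau\le\max\{-\langle\xi,y(T(x))-\bar x\rangle,0\}/H(\bar x,\xi)+C''\|y(T(x))-\bar x\|^2$. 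Combining $T(y_0)\le T(x)-t+\tau$ with the backward comparison yields $T(y_0)\le T(x(t))-\langle p(t),y_0-x(t)\rangle+C'\|y_0-x(t)\|^2$, i.e.\ $-p(t)\in\partial^PT(x(t))$. For case (ii), with $p(T(x))=\xi$, the comparison reads $\langle\xi,y(T(x))-\bar x\rangle\ge\langle p(t),y_0-x(t)\rangle-C\|y_0-x(t)\|^2$; when $y(T(x))\notin\mathcal S$ the proximal inner-normal inequality bounds the left-hand side directly by $\sigma'\|y_0-x(t)\|^2$, while if $y(T(x))\in\mathcal S$ one restricts the estimate to the first hitting time $s^*\in[t,T(x)]$ of $\mathcal S$ by $y(\cdot)$ and absorbs the residual in the quadratic vertical slack $(T(y_0)-T(x(t)))^2$ built into the definition of $\partial^{\infty}$. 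The substantive technical point will be the Petrov-type entry lemma in case (i), which turns the mere positivity of $H(\bar x,\xi)$ into a quadratically sharp time-to-target estimate; everything else is Gronwall bookkeeping.
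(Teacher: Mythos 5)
Your overall machinery is the same as the paper's: you transport the proximal normal inequality backward along the flow of $F_{p(\cdot)}$ using the identity $\tfrac{d}{ds}\langle p(s),y(s)-x(s)\rangle=\langle\dot p(s),y(s)-x(s)\rangle+H(y(s),p(s))-H(x(s),p(s))$ together with Proposition~\ref{prop:consSA}(iii) and Gronwall, and you supplement it with a near-target estimate (your Petrov-type entry lemma is essentially equivalent to the paper's Step~1 inequality \eqref{eq:Est2} for $\beta\ge 0$). That part is sound, and the sharp coefficient $1/H(\bar x,\xi)$ in your entry-time bound does survive the error terms because the entry time is itself $O(\|z-\bar x\|)$.

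There is, however, a genuine gap in your case (i), for comparison points $y_0$ with $\langle p(t),y_0-x(t)\rangle>0$. Membership $(p(t),1)\in N^P_{\mathrm{hypo}\,T}(x(t),T(x(t)))$, tested at $(y_0,T(y_0))$, forces the \emph{signed} bound $T(y_0)-T(x(t))\le-\langle p(t),y_0-x(t)\rangle+C\|y_0-x(t)\|^2$, whose right-hand side is negative of order $\|y_0-x(t)\|$ in this regime (take $y_0=x(t)+\varepsilon p(t)/\|p(t)\|$ to see it is not dispensable; this half of the inequality is precisely what Proposition~\ref{pr:H} later uses to prove $\langle p(t),v\rangle\le 1$ for all $v\in F(x(t))$). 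Your construction bounds $T(y_0)$ by $(T(x)-t)+\tau$ with $\tau\ge 0$, so after combining with the backward comparison you only obtain $T(y_0)-T(x(t))\le\max\{-\langle p(t),y_0-x(t)\rangle,0\}+C\|y_0-x(t)\|^2$, which degenerates to $T(y_0)-T(x(t))\le C\|y_0-x(t)\|^2$ exactly when the negative linear term is needed. The point is that in this regime the flow $y(\cdot)$ typically enters $\mathrm{int}\,\mathcal S$ strictly before time $T(x)$, and one must quantify \emph{how much} before. The paper does this by stopping the comparison flow at $T_1=T(y_0)\le T(x)$ (so $y(T_1)\in\overline{\mathcal S^c}$ and \eqref{eq:Est1} applies there) and crediting the residual displacement of the \emph{reference} trajectory, $\langle p(T),x(T)-x(T_1)\rangle=\int_{T_1}^T\langle p(T),F_{p(s)}(x(s))\rangle\,ds= H(\bar x,p(T))(T-T_1)+O(|T-T_1|^2)$, which supplies the missing linear term $\alpha(T-T_1)=T(x(t))-T(y_0)$. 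You could equally patch your argument by stopping at the first hitting time $s^*$ of $\mathcal S$ by $y(\cdot)$ and performing the same accounting, as you already sketch for case~(ii); but as written, case~(i) does not close.
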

\begin{proof} 
Applying the dynamic programming principle, one can see that if the conclusion of the theorem holds for $t=0$ then it will hold for all $t\in [0,T(x))$. 
Therefore, we only need to prove the result for $t=0$. We divide the proof into two main steps.

\medskip

\emph{Step 1 (near the target $\mathcal{S}$)}: There exists a constant $C_1\ge 0$ such that for all $\bar y\in\overline{\mathcal{S}^c}$ and $\beta\leq T(\bar{y})$
it holds:
\begin{equation}\label{eq:Est2}
\langle p(T(x)),\bar{y}-\bar{x}\rangle+\beta H(\bar{x},p(T(x)))\leq C_1\cdot\left(\|\bar{y}-\bar{x}\|^2+\beta^2\right). 
\end{equation}

\medskip

\emph{Proof of Step 1:}
Set $\alpha=H(\bar{x},p(T(x)))$. Since $p(T(x))=\lambda\cdot\xi\in N^P_{\overline{\mathcal{S}^c}}(\bar{x})$, we have that 
there exists $C>0$ such that:
\begin{equation}\label{eq:Est1}
\big\langle p(T(x)),\bar{y}-\bar{x}\big\rangle\leq C\cdot \|\bar{y}-\bar{x}\|^2,\quad\forall \bar{y}\in\overline{\mathcal{S}^c}.
\end{equation}
Therefore, if $\beta\leq 0$, then \eqref{eq:Est2} follows from \eqref{eq:Est1} by choosing $C_1=C$. Otherwise, if $0<\beta\leq T(\bar{y})$, we consider the differential equation
\begin{equation}\label{eq:Forward1}
\begin{cases} \dot{y}(t)&=F_{p(T(x))}(y(t)),\\ y(0)&=\bar{y}, \end{cases}
\end{equation}
and set $\bar{y}_1:=y(\beta)$. Since $\bar{y}_1\in\overline{\mathcal{S}^c}$, recalling \eqref{eq:Est1} we have that
\begin{align*}
\left\langle p(T(x)),\bar{y}_1-\bar{x}\right\rangle&\leq C\cdot \|\bar{y_1}-\bar{x}\|^2
\leq2C\cdot \left(\|\bar{y}_1-\bar{y}\|^2+\|\bar{y}-\bar{x}\|^2\right)\\
&\leq2C\cdot \left(\|y(\beta)-y(0)\|^2+\|\bar{y}-\bar{x}\|^2\right)
\leq C_2\cdot \left(\beta^2+\|\bar{y}-\bar{x}\|^2\right),
\end{align*}
where $C_2$ is a suitable positive constant. On the other hand, we compute 
\begin{align*}
\left\langle p(T(x)),\bar{y}-\bar{y}_1\right\rangle=& \left\langle p(T(x)),-\int^{\beta}_{0}F_{p(T(x))}(y(s))\,ds\right\rangle\\
=&-\int_{0}^{\beta}\left\langle p(T(x)),F_{p(T(x))}(y(s))\right\rangle\,ds\\
=&-\int_{0}^{\beta}\left\langle p(T(x)),F_{p(T(x))}(\bar{x})\right\rangle\,ds+\\
&\hspace{1cm}-\int_{0}^{\beta}\left\langle p(T(x)),F_{p(T(x))}(y(s))-F_{p(T(x))}(\bar{x})\right\rangle\,ds\\
\leq&-\alpha\cdot\beta+C_3\cdot\left(\|\bar{y}-\bar{x}\|^2+\beta^2\right),
\end{align*}
where  $C_3$ is another suitable positive constant (we used also item (ii) in Proposition \ref{prop:consSA}). Combining the two above inequalities, we obtain that
\begin{equation}
\left\langle p(T(x)),\bar{y}-\bar{x}\right\rangle\leq-\alpha\cdot\beta+(C_2+C_3)\cdot\left(\|\bar{y}-\bar{x}\|^2+\beta^2\right).
\end{equation}
Therefore, \eqref{eq:Est2} holds with $C_1=\max\lbrace{C,C_2+C_3\rbrace}$, and this concludes the proof of Step 1.
\hfill$\diamond$

\medskip

\emph{Step 2:} We show that $(p(0),\alpha)\in N^P_{\mathrm{hypo}(T)}(x,T(x))$,
i.e. there exists a positive constant $C_5$ such that for all $y\in\overline{\mathcal{S}^c}$ and $\beta\leq T(y)$:
\begin{equation}\label{eq:Hypo-exterior}
\langle p(0),y-x\rangle+\alpha (\beta-T(x))\leq C_5\cdot \left(\|y-x\|^2+|\beta-T(x)|^2\right)
\end{equation}

\emph{Proof of Step 2: } There are two possible cases:

\begin{figure}[t]
\includegraphics[scale=.7]{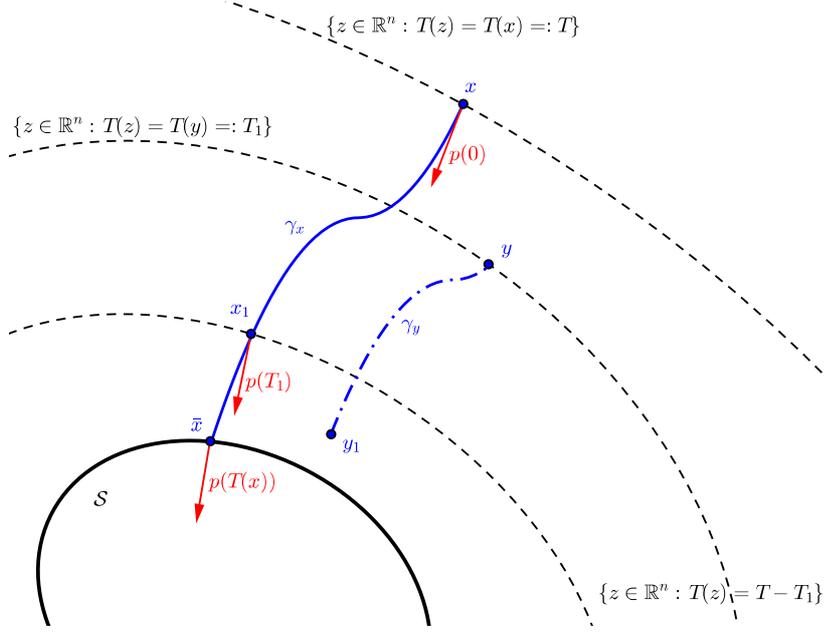}
\caption{First case of the proof of Theorem \ref{thm:dai}, Step 2.}
\end{figure}

\emph{First case:} Suppose $T_1:=T(y)\leq T(x)=:T$. In this case, we only need to show that \eqref{eq:Hypo-exterior} holds for $\beta=T_1$, i.e., 
\begin{equation}\label{eq:case1}
\langle p(0),y-x\rangle+\alpha (T_1-T)\leq C_5\cdot \left(\|y-x\|^2+|T_1-T|^2\right).
\end{equation}
We consider the differential equation:
\begin{equation}\label{eq:Forward2}
\begin{cases}\dot{y}(t)=F_{p(t)}(y(t)),\\ y(0)=y,\end{cases}
\end{equation}
and set $y_1=y(T_1)$, $x_1=x(T_1)$. Observing that $T(x_1)=T-T_1$ and $y_1\in\overline{\mathcal{S}^c}$. We first compute:
\begin{align*}
\langle p(T_1)&,y_1-x_1\big\rangle=\langle p(0),y-x\rangle+\int_{0}^{T_1}\frac{d}{ds}\langle p(s),y(s)-x(s)\rangle ds\,\\
&=\langle p(0),y-x\rangle+\int_{0}^{T_1}\langle\dot{p}(s),y(s)-x(s)\rangle+\langle p(s),F_{p(s)}(y(s))-F_{p(s)}(x(s))\rangle\, ds\\
&=\langle p(0),y-x\rangle+\int_{0}^{T_1}\langle\dot{p}(s),y(s)-x(s)\rangle+H(y(s),p(s))-H(x(s),p(s))ds.
\end{align*}
Since $-\dot{p}(s)\in\partial_xH(x(s),p(s))$, by Proposition \ref{prop:consSA} we have that
\[\langle -\dot{p}(s),y(s)-x(s)\rangle+H(x(s),p(s))-H(y(s),p(s))\leq c_0\cdot \|p(s)\|\cdot \|y(s)-x(s)\|^2.\]
Hence, for a suitable constant $C_6>0$, it holds
\[\langle -\dot{p}(s),y(s)-x(s)\rangle+H(x(s),p(s))-H(y(s),p(s))\leq C_6\cdot \|y-x\|^2\]
Therefore, 
\begin{equation}\label{eq:Est6}
\langle p(0),y-x\rangle\leq \left\langle p(T_1),y_1-x_1\right\rangle+ C_6\cdot \|y-x\|^2.
\end{equation}
Now, we compute
\begin{align*}
 \left\langle p(T_1),y_1-x_1\right\rangle=& \left\langle p(T),y_1-x_1\right\rangle+ \left\langle p(T_1)-p(T),y_1-x_1\right\rangle\\
 \leq& \left\langle p(T),y_1-x_1\right\rangle+C_7\cdot(|T-T_1|^2+\|y-x\|^2),
\end{align*}
and 
\begin{align*}
 \big\langle p(T),&y_1-x_1\big\rangle= \left\langle p(T),x(T)-x(T_1)\right\rangle+ \left\langle p(T),y_1-\bar{x}\right\rangle\\
 \leq&\int_{T_1}^T\left\langle p(T),F_{p(s)}(x(s))\right\rangle ds+\left\langle p(T),y_1-\bar{x}\right\rangle\\
 =&\int_{T_1}^T\left\langle p(T),F_{p(s)}(x(s))-F_{p(T)}(x(T))\right\rangle ds+\alpha\cdot (T-T_1)+\left\langle p(T),y_1-\bar{x}\right\rangle.
\end{align*}
Recalling \eqref{eq:Est2}, we have that
\[\left\langle p(T),y_1-\bar{x}\right\rangle\leq C\cdot \|y_1-\bar{x}\|^2\leq C_8\cdot \left(|T-T_1|^2+\|y_1-\bar x\|^2\right),\]
for a suitable positive constant $C_8$. On the other hand, for any $s\in [T_1,T]$,
\begin{align*}
\left\langle p(T),F_{p(s)}(x(s))-F_{p(T)}(x(T))\right\rangle=\hspace{-15em}&\\
&=\left\langle p(T),F_{p(T)}(x(s))-F_{p(T)}(x(T))\right\rangle+\left\langle p(T),F_{p(s)}(x(s))-F_{p(T)}(x(s))\right\rangle\\
&\leq C_9\cdot|s-T|,
\end{align*}
for some constant $C_9>0$.
Hence,
\[\int_{T_1}^T\left\langle p(T),F_{p(s)}(x(s))-F_{p(T)}(x(T))\right\rangle ds\leq\frac{C_9}{2}\cdot |T-T_1|^2.\]
Therefore, we obtain that
\[\left\langle p(T),y_1-x_1\right\rangle\leq\alpha\cdot (T-T_1)+C_{10}\cdot\left(|T-T_1|^2+\|y-x\|^2\right),\]
for a suitable  constant $C_{10}$.
Combining this estimate with \eqref{eq:Est6}, we finally get \eqref{eq:case1}.

\medskip

\emph{Second case:} $T_1:=T(y)>T(x)=:T$. We consider the differential equation
\begin{equation}\label{eq:Forward3}
\begin{cases}\dot{y}(t)=F_{p(t)}(y(t)),\\ y(0)=y, \end{cases}
\end{equation}
and set $\bar{y}=y(T)$. By  the same arguments of the first step, we have that
\begin{equation}
\langle p(0),y-x\rangle\leq\langle p(T),\bar{y}-\bar{x}\rangle +C\cdot \|y-x\|^2.
\end{equation}
On the other hand, observing that $\bar{y}\in\mathcal{S}^c$,  from \eqref{eq:Est2} we obtain that
\[\left\langle p(T),\bar{y}-\bar{x}\right\rangle+\alpha\cdot\beta_1\leq C_1\cdot\left(\|\bar{y}-\bar{x}\|^2+\beta_1^2\right)\]
for all $\beta_1\leq T(\bar{y})$. Thus, for every $\beta\leq T(y)$, by the dynamic programming principle we have that $\beta-T\leq T(\bar{y})$. 
Hence, taking $\beta_1=\beta-T$ in the above inequality, we get that
\[\left\langle p(T),\bar{y}-\bar{x}\right\rangle+\alpha\cdot(\beta-T)\leq C_1\cdot\left(\|\bar{y}-\bar{x}\|^2+|\beta-T|^2\right).\] 
By Gronwall's inequality, we finally obtain that there exists $C_{11}>0$ such that
\[\left\langle p(T),\bar{y}-\bar{x}\right\rangle+\alpha\cdot(\beta-T)\leq C_{11}\cdot\left(\|y-x\|^2+|\beta-T|^2\right).\]
The proof is complete.
\end{proof}

\subsection{The constancy of $H$}
\begin{proposition}\label{pr:H}
Assume (F) and (H). Let $x\in\mathbb{R}^n\backslash\mathcal{S}$, let $x(\cdot)$ be an optimal trajectory at $x$  
reaching $\mathcal{S}$ in time $T(x)$, and let $\xi$ be a  proximal inner unit normal to $\mathcal{S}$ at $\bar{x}=x(T(x))$. Consider the system:
\begin{equation}\label{eq:Couple-S1}
\begin{cases}\hspace{.3cm}\dot{x}(t)=F_{p(t)}(x(t)),\vspace{.2cm}\\ -\dot{p}(t)\in\partial_xH(x(t),p(t))
 \end{cases}
 \quad \text{for a.e.} \;t\in [0,T(x)].
\end{equation}
Then:
\begin{enumerate}
\item[(i)] if $H(\bar{x},\xi)\ne 0$ and we couple system \eqref{eq:Couple-S1} with the condition $p(T(x))=\dfrac{\xi}{H(\bar x,\xi)}$, we have $H(x(t),p(t))=1$ for all $t\in[0,T(x)]$,
\item[]
\item[(ii)] if $H(\bar{x},\xi)=0$ and we couple system \eqref{eq:Couple-S1} with the condition $p(T(x))=\xi$, we have $H(x(t),p(t))=0$ for all $t\in [0,T(x)]$.
\end{enumerate}
\end{proposition}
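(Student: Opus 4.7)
The plan is to combine the conclusions of Theorem~\ref{thm:dai} with the dynamic programming principle, which forces $T(x(t)) = T(x) - t$ along the optimal trajectory $x(\cdot)$, and then read off the value of $H$ directly from the defining inequalities of $\partial^PT$ and $\partial^{\infty}T$. Before entering the interior estimate, I would dispose of the endpoint $t = T(x)$ at once: in case (i), the terminal condition $p(T(x)) = \xi/H(\bar{x},\xi)$ together with the positive $1$-homogeneity of $H(\bar{x},\cdot)$ (it is a support function in $p$) gives $H(\bar{x},p(T(x))) = H(\bar{x},\xi)/H(\bar{x},\xi) = 1$; in case (ii), $H(\bar{x},p(T(x))) = H(\bar{x},\xi) = 0$ by hypothesis. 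All remaining work is at $t \in [0,T(x))$.

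For case (i), fix $t \in (0,T(x))$ at which $\dot{x}(t)$ exists and equals $F_{p(t)}(x(t))$, which holds a.e. By Theorem~\ref{thm:dai}(i), $-p(t)\in\partial^PT(x(t))$, so there exist $\sigma_t > 0$ and a neighborhood $U_t$ of $x(t)$ with
\[
T(y) - T(x(t)) \leq \langle -p(t),\, y - x(t)\rangle + \sigma_t \|y - x(t)\|^2, \qquad \forall\, y \in U_t.
\]
Taking $y = x(t+h)$ for $|h|$ small (so that $x(t+h) \in U_t$ by continuity of $x(\cdot)$), and using $T(x(t+h)) - T(x(t)) = -h$ from dynamic programming, one gets
\[
-h + \langle p(t),\, x(t+h) - x(t)\rangle \leq \sigma_t \|x(t+h) - x(t)\|^2 = O(h^2).
\]
Dividing by $h > 0$ and by $h < 0$ separately and letting $h \to 0$ sandwiches $\langle p(t), \dot{x}(t)\rangle$ to the value $1$. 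Invoking the identity $\langle p(t), F_{p(t)}(x(t))\rangle = H(x(t), p(t))$ from Proposition~\ref{prop:consSA}(ii), we conclude $H(x(t),p(t)) = 1$ for a.e. $t \in (0, T(x))$. Continuity of $H$ and of the arcs $x(\cdot),p(\cdot)$ promotes this pointwise to every $t \in [0, T(x)]$, consistently gluing with the endpoint value already computed.

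For case (ii), the scheme is parallel. From Theorem~\ref{thm:dai}(ii), $(p(t), 0) \in N^P_{\mathrm{hypo}(T)}(x(t), T(x(t)))$, which unfolds to
\[
\langle p(t),\, y - x(t)\rangle \leq \sigma_t\bigl(\|y - x(t)\|^2 + (\beta - T(x(t)))^2\bigr),
\]
for $(y,\beta) \in \mathrm{hypo}\,T$ near $(x(t), T(x(t)))$. Choosing $y = x(t+h)$ and $\beta = T(x(t+h)) = T(x(t)) - h$ makes both terms on the right $O(h^2)$, and the two-sided limit in $h$ gives $\langle p(t), \dot{x}(t)\rangle = 0$, hence $H(x(t),p(t)) = 0$ for a.e. $t$, and everywhere by continuity. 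The only delicate point I anticipate is handling the boundary instants $t = 0$ and $t = T(x)$, where the substitution $y = x(t+h)$ is only one-sided; this is resolved by noting that $t \mapsto H(x(t),p(t))$ is continuous on all of $[0, T(x)]$ (since $x(\cdot)$ and $p(\cdot)$ are absolutely continuous and $H$ is continuous), so a.e. constancy on the interior forces constancy on the closed interval and matches the endpoint values computed above.
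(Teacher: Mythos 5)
Your argument is correct, but it reaches the key identity by a genuinely different route than the paper. The paper obtains the upper bound $H(x(t),p(t))\le 1$ (resp.\ $\le 0$) by testing the proximal inequality against \emph{every} velocity $v\in F(x(t))$, using Proposition~\ref{prop:regtraj} to produce a $C^1$ trajectory emanating from $x(t)$ with initial velocity $v$; this gives $\langle p(t),v\rangle\le 1$ for all $v\in F(x(t))$, hence $H\le 1$ at \emph{every} $t$, and the matching lower bound comes from a one-sided backward quotient along the optimal trajectory. You instead take the two-sided difference quotient along the optimal trajectory alone, which pins down $\langle p(t),\dot x(t)\rangle=1$ at a.e.\ $t$, and then convert this into $H(x(t),p(t))=1$ via the a.e.\ relation $\dot x(t)=F_{p(t)}(x(t))$ together with $\langle p,F_p(x)\rangle=H(x,p)$ from Proposition~\ref{prop:consSA}(ii), concluding by continuity of $t\mapsto H(x(t),p(t))$. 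Your version is more economical --- it never invokes Proposition~\ref{prop:regtraj} --- at the price of obtaining the identity only a.e.\ before the continuity step; both arguments are sound. Two minor points: in case (i) your displayed supergradient inequality omits the $|T(y)-T(x(t))|^2$ term that the proximal normal inequality for $\mathrm{hypo}\,T$ actually carries (harmless here, since along the trajectory that term is $O(h^2)$, and you do include it in case (ii)); and your endpoint computation $H\big(\bar x,\xi/H(\bar x,\xi)\big)=1$ uses positive $1$-homogeneity and therefore tacitly assumes $H(\bar x,\xi)>0$ --- the same tacit assumption the paper makes when it asserts $H(\bar x,p(T(x)))=1$ at the outset.
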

\begin{proof}
\begin{enumerate}
\item[]
\item[(i)] Assume that $H(\bar{x},p(T(x)))=1$. Fixing $t\in (0,T(x))$ we will show that $H(x(t),p(t))=1$. 
First, observe that, by Theorem~\ref{thm:dai}, $-p(t)\in\partial^PT(x(t))$ for all $t\in [0,T(x))$. 
Now, for every $v\in F(x(t))$,  Proposition \ref{prop:regtraj} ensures that there exists a trajectory $y(\cdot)\in C^1$ starting from $x(t)$ at time $t$ such that
\[\|\dot{y}(t+s)-v\|\leq K\cdot s,\quad\forall s\in(0,\delta).\] 
Since $-p(t)\in\partial^PT(x(t))$, we have that
\begin{align*}
\left\langle p(t),y(t+s)-x(t)\right\rangle+\beta-T(x(t))\le&\\ 
&\hspace{-10em}\leq C\cdot \left(\|y(t+s)-x(t)\|^2+|\beta-T(x(t))|^2\right),
\end{align*}
for all $\beta\leq T(y(t+s))$. Observe that $T(y(t+s))\geq T(x(t))-s$ and choose $\beta=T(x(t))-s$ in the above inequality to obtain 
\[\left\langle p(t),y(t+s)-x(t)\right\rangle-s\leq C_1\cdot s^2.\]
Hence,
\[\langle p(t),v\rangle=\lim_{s\rightarrow 0^+}\frac{\left\langle p(t),y(t+s)-x(t)\right\rangle}{s}\leq 1.\] 
This implies that $H(x(t),p(t))\leq 1$.

On the other hand, we have
\begin{align*}
\langle p(t),x(t-s)-x(t)\rangle+T(x(t-s))-T(x(t))\le&\\ 
&\hspace{-15em}\le C\cdot \left(\|x(t-s)-x(t)\|^2+|T(x(t-s))-T(x(t))|^2\right).
\end{align*}
Since $T(x(t-s))-T(x(t))=s$, we get that
\[\langle p(t),x(t-s)-x(t)\rangle+s\leq C_1\cdot s^2.\]
thus
\[1\leq\limsup_{s\rightarrow 0^+}\left\langle p(t),\frac{x(t)-x(t-s)}{s}\right\rangle\leq H(x(t),p(t)).\]
Therefore, we finally obtain that 
\[H(x(t),p(t))=1,\quad\forall t\in (0,T(x)].\]
By the continuity of $H$, the proof of (i) is complete.
\item[]
\item[(ii)] Assume that $H(\bar{x},p(T(x)))=0$. Fixing $t\in (0,T(x))$, we will show that $H(x(t),p(t))=0$.
For every $v\in F(x(t))$, there exists a trajectory $y(\cdot)\in C^1$ starting from $x(t)$ at time $t$ such that
\[\|\dot{y}(t+s)-v\|\leq K\cdot s,\quad\forall s\in(0,\delta).\] 
Since $-p(t)\in\partial^{\infty}T(x(t))$, we have that
\[\left\langle p(t),y(t+s)-x(t)\right\rangle\leq C\cdot\left(\|y(t+s)-x(t)\|^2+|\beta-T(x(t))|^2\right),\]
for every $\beta\leq T(y(t+s))$. Since $T(y(t+s))\geq T(x(t))-s$, we can choose $\beta=T(x(t))-s$ to obtain, by the above inequality,
\[\left\langle p(t),y(t+s)-x(t)\right\rangle\leq C_1\cdot s^2.\]
Hence 
\[\langle p(t),v\rangle=\lim_{s\rightarrow 0^+}\left\langle p(t),\frac{y(t+s)-x(t)}{s}\right\rangle\leq 0.\]
Therefore, $H(x(t),p(t))\leq 0$.

On the other hand,  
\begin{align*}\langle p(t),x(t-s)-x(t)\rangle&\leq\\ &\hspace{-5em}\le C\cdot \left(\|x(t-s)-x(t)\|^2+|T(x(t-s))-T(x(t))|^2\right).\end{align*}
Since $T(x(t-s))-T(x(t))=s$,  from the above inequality we deduce that 
\[\langle p(t),x(t-s)-x(t)\rangle\leq C_1\cdot s^2.\]
Thus,
\[0\leq\limsup_{s\rightarrow 0^+}\left\langle p(t),\frac{x(t)-x(t-s)}{s}\right\rangle\leq H(x(t),p(t)).\]
Therefore, 
\[H(x(t),p(t))=0,\quad\forall t\in (0,T(x)].\]
By the continuity of $H$, the proof of (ii) is complete.
\end{enumerate}
\end{proof}

\subsection{Optimality conditions}

\begin{theorem} 
Assume (F) and (H). Let $x\in\mathbb{R}^n\setminus\mathcal{S}$ and let $x(\cdot)$ 
be a trajectory starting from $x$ such that $x(T)\in\mathcal{S}$. Suppose  there exists a 
continuous function $p(\cdot):[0,T]\to\mathbb{R}^n$ such that  
\begin{equation}\label{eq:Sufficient-C}
\begin{cases}
\hspace{.3cm}\dot{x}(t)=F_{p(t)}(x(t))
\vspace{.2cm}\\
-p(t)\in\partial^{P}T(x(t))\vspace{.2cm}\\
\hspace{.3cm}H(x(t),p(t))=1
\end{cases}
\quad \text{for a.e.}\; t\in[0,T]\,.
\end{equation}
Then $x(\cdot)$ is an optimal trajectory.  
\end{theorem}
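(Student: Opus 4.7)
Since $x(T)\in\mathcal{S}$ already yields $T(x)\le T$, my task is the reverse inequality $T(x)\ge T$. The plan is to introduce
\[
f(t):=T(x(t))+t,\qquad t\in[0,T],
\]
and to show that $f$ is constant on $[0,T]$. Since $f(T)=T$ and $f(0)=T(x)$, constancy will immediately give $T(x)=T$.

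I will establish constancy of $f$ via two complementary one-sided estimates. \emph{Monotonicity}: the dynamic programming principle applied to the sub-arc $x(\cdot)|_{[t_1,t_2]}$ gives $T(x(t_1))\le(t_2-t_1)+T(x(t_2))$ for $0\le t_1\le t_2\le T$, so $f$ is nondecreasing. \emph{Upper Dini bound $D^+f(t)\le 0$ for $t\in[0,T)$}: substituting $y=x(t+s)$ into the proximal supergradient inequality coming from $-p(t)\in\partial^PT(x(t))$ and adding $s$ to both sides yields
\[
f(t+s)-f(t)\le s-\langle p(t),x(t+s)-x(t)\rangle+C\|x(t+s)-x(t)\|^2.
\]
The key local computation rewrites the middle term as $\int_t^{t+s}\langle p(t),F_{p(\tau)}(x(\tau))\rangle\,d\tau$ and splits the integrand as
\[
\langle p(t),F_{p(\tau)}(x(\tau))\rangle=H(x(\tau),p(\tau))+\langle p(t)-p(\tau),F_{p(\tau)}(x(\tau))\rangle.
\]
The first summand equals $1$ by hypothesis, while the second is $o(1)$ uniformly as $\tau\to t$ by uniform continuity of $p$ on the compact interval $[0,T]$ and the uniform bound on $\|F_{p(\tau)}(x(\tau))\|$ along the (bounded) trajectory, via \eqref{extra}. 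Hence $\langle p(t),x(t+s)-x(t)\rangle=s+o(s)$, and since $\dot x$ is bounded on $[0,T]$ the quadratic term is $O(s^2)$. Therefore $f(t+s)-f(t)\le o(s)$ as $s\to 0^+$.

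To close the argument I will combine the Dini bound with monotonicity. The lower semicontinuity of $T$ (standard under hypothesis (F)) together with the monotonicity of $f$ gives $f(t^-)=f(t)$, i.e., left-continuity, while the Dini bound $f(t+s)\le f(t)+o(s)$ combined with monotonicity gives $f(t^+)=f(t)$, i.e., right-continuity; so $f$ is continuous on $[0,T]$. The classical mean-value theorem for Dini derivatives then asserts that a continuous function with $D^+f\le 0$ on $[0,T)$ is nonincreasing; coupled with the nondecreasingness established above, this forces $f$ to be constant on $[0,T]$ and yields $T(x)=f(0)=f(T)=T$.

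The principal technical difficulty will be the local estimate $\langle p(t),x(t+s)-x(t)\rangle=s+o(s)$: the pointwise identity $\langle p(t),F_{p(t)}(x(t))\rangle=1$ alone is insufficient, because the integrand pairs the frozen covector $p(t)$ with the velocity $F_{p(\tau)}(x(\tau))$ generated by the running covector $p(\tau)$, so one must genuinely exploit the fact that $H$ is \emph{constantly} $1$ along the whole pair $(x(\cdot),p(\cdot))$, and not merely at the single point $\tau=t$, together with the uniform continuity of $p$ on $[0,T]$. Once this is in place, the remaining pieces---the DPP, the lower semicontinuity of $T$, and the passage from a Dini bound to monotonicity on a continuous function---are all classical.
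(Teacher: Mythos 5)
Your argument is, at its core, the same as the paper's: both rest on testing the proximal supergradient inequality at $x(t)$ against $y=x(t+s)$, on the identity $\langle p(\tau),F_{p(\tau)}(x(\tau))\rangle=H(x(\tau),p(\tau))=1$ together with the continuity of $p(\cdot)$ to get $\langle p(t),x(t+s)-x(t)\rangle=s+o(s)$, and on the dynamic programming principle for the reverse inequality. The packaging differs slightly and is arguably cleaner: you pass to the monotone function $f(t)=T(x(t))+t$ and need only the forward Dini bound $D^+f\le 0$ plus the mean value theorem for Dini derivatives, whereas the paper computes both one-sided derivatives of $\varphi(t)=T(x(t))$ and concludes $\dot\varphi\equiv-1$. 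You also make explicit the continuity of $t\mapsto T(x(t))$, which the paper leaves implicit.

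There is, however, one step you should repair. The supergradient inequality you quote is missing a term: $-p(t)\in\partial^PT(x(t))$ means $(p(t),1)\in N^P_{\mathrm{hypo}\,T}(x(t),T(x(t)))$, so testing against the hypograph point $(x(t+s),T(x(t+s)))$ gives
\[
\langle p(t),x(t+s)-x(t)\rangle+T(x(t+s))-T(x(t))\le C\bigl(\|x(t+s)-x(t)\|^2+|T(x(t+s))-T(x(t))|^2\bigr),
\]
and the last term cannot simply be dropped: it equals $C\,|f(t+s)-f(t)-s|^2$ and is negligible only once you already know that $f$ is right-continuous at $t$. Since your continuity argument in turn invokes the Dini bound, the reasoning as written is mildly circular: setting $\Delta:=f(t+s)-f(t)\ge 0$, the corrected inequality yields only $\Delta(1-C\Delta)\le o(s)$, which by itself does not exclude a jump of $f$ of size at least $1/C$. (The paper carries the same term as the factor $1-C\,(\varphi(t+s)-\varphi(t))$ and likewise disposes of it only under the tacit assumption that $\varphi(t+s)\to\varphi(t)$.) To close this you need an independent reason why $T(x(t+s))\to T(x(t))$ as $s\to 0^+$, e.g.\ continuity of $T$ along the arc or a uniform proximal constant permitting an absorption argument. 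Apart from this point, which your write-up shares with the paper in disguised form, the proposal is sound.
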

\begin{proof}
Set $\varphi(\cdot)=T(x(\cdot))$. Owing to the dynamic programming principle,
for every $t\in[0,T]$ and $0<s<T-t$ we have
\[\varphi(t+s)\geq\varphi(t)-s.\]
Hence, $\displaystyle\liminf_{s\rightarrow 0^+}\dfrac{\varphi(t+s)-\varphi(t)}{s}\geq -1$. 

On the other hand, since $-p(t)\in\partial^{P}T(x(t))$, we have that 
\begin{align*}
\langle p(t),x(t+s)-x(t)\rangle+\varphi(t+s)-\varphi(t)&\leq\\ &\hspace{-10em}\leq C\cdot\left(\|x(t+s)-x(t)\|^2+|\varphi(t+s)-\varphi(t)|^2\right).
\end{align*}
This is equivalent to
{\small
\begin{align*}
C\cdot&\frac{\|x(t+s)-x(t)\|^2}{s}\geq\\ 
&\geq\left\langle p(t),\frac{x(t+s)-x(t)}{s}\right\rangle+\frac{\varphi(t+s)-\varphi(t)}{s}\cdot\left(1-C\cdot(\varphi(t+s)-\varphi(t))\right)\\
&=\frac{1}{s}\int_0^s \langle p(t),F_{p(t+\tau)}(x(t+\tau))\rangle\,d\tau+\frac{\varphi(t+s)-\varphi(t)}{s}\cdot\left(1-C\cdot(\varphi(t+s)-\varphi(t))\right)\\
&=\frac{1}{s}\int_0^s \langle p(t+\tau),F_{p(t+\tau)}(x(t+\tau))\rangle\,d\tau+\frac{1}{s}\int_0^s \langle p(t)-p(t+\tau),F_{p(t+\tau)}(x(t+\tau))\rangle\,d\tau+\\
&\hspace{1cm}+\frac{\varphi(t+s)-\varphi(t)}{s}\cdot\left(1-C\cdot(\varphi(t+s)-\varphi(t))\right)\\
&=\frac{1}{s}\int_0^s H(x(t+\tau),p(t))\,d\tau+\frac{1}{s}\int_0^s \langle p(t)-p(t+\tau),F_{p(t+\tau)}(x(t+\tau))\rangle\,d\tau+\\
&\hspace{1cm}+\frac{\varphi(t+s)-\varphi(t)}{s}\cdot\left(1-C\cdot(\varphi(t+s)-\varphi(t))\right),\\
&=1+\frac{1}{s}\int_0^s \langle p(t)-p(t+\tau),F_{p(t+\tau)}(x(t+\tau))\rangle\,d\tau+\\
&\hspace{1cm}+\frac{\varphi(t+s)-\varphi(t)}{s}\cdot\left(1-C\cdot(\varphi(t+s)-\varphi(t))\right),
\end{align*}}
where we used the Mean Value Theorem and the fact that 
\[\langle p(t+\tau),F_{p(t+\tau)}(x(t+\tau))\rangle=H(p(t+\tau),x(t+\tau))=1,\] for a.e. $\tau [0,s]$ and for every $0<s<T-t$.
The continuity of $p(\cdot)$ and the estimate
\[\lim_{s\rightarrow 0^+}\dfrac{\|x(t+s)-x(t)\|^2}{s}=0,\]
now yield for every $t\in]0,T[$:
\[\limsup_{s\rightarrow 0^+}\dfrac{\varphi(t+s)-\varphi(t)}{s}\leq -1.\]
Thus, $\displaystyle\lim_{s\rightarrow 0^+}\dfrac{\varphi(t+s)-\varphi(t)}{s}=-1$ for every $0<t<T$.

Now, for $s<0$, we have that $\varphi(t+s)\leq \varphi(t)-s$. Thus, $\dfrac{\varphi(t+s)-\varphi(t)}{s}\geq -1$ for all $0<t<T$ and $-t<s<0$. Hence,
\[\liminf_{s\rightarrow 0^-}\frac{\varphi(t+s)-\varphi(t)}{s}\geq -1.\]
On the other hand, we have
\begin{align*}
C\cdot&\|x(t)-x(t+s)\|^2\geq\\
&\ge\left\langle p(t+s),x(t)-x(t+s)\right\rangle+(\varphi(t)-\varphi(t+s))\cdot(1-C\cdot\left(\varphi(t)-\varphi(t+s))\right).
\end{align*}
For $s<0$, by performing similar computations as done in the case $0<s<T-t$, from the above inequality we get that
{\small
\begin{align*}
&C\cdot\frac{\|x(t)-x(t+s)\|^2}s\geq\\
&\geq\left\langle p(t+s),\frac{x(t)-x(t+s)}{s}\right\rangle+\frac{\varphi(t)-\varphi(t+s)}{s}\cdot(1-C\cdot\left(\varphi(t)-\varphi(t+s))\right)\\
&\geq\left\langle p(t+s)-p(t),\frac{x(t)-x(t+s)}{s}\right\rangle+\left\langle p(t),\frac{x(t)-x(t+s)}{s}\right\rangle+\\
&+\frac{\varphi(t)-\varphi(t+s)}{s}\cdot(1-C\cdot\left(\varphi(t)-\varphi(t+s))\right)\\
&\geq\left\langle p(t+s)-p(t),\frac{x(t)-x(t+s)}{s}\right\rangle-\dfrac 1s\int_0^s\left\langle p(t),F_{p(t+\tau)}(x(t+\tau))\right\rangle\,d\tau+\\
&+\frac{\varphi(t)-\varphi(t+s)}{s}\cdot(1-C\cdot\left(\varphi(t)-\varphi(t+s))\right)\\
&\geq\left\langle p(t+s)-p(t),\frac{x(t)-x(t+s)}{s}\right\rangle-\dfrac 1s\int_0^s\left\langle p(t)-p(t+\tau),F_{p(t+\tau)}(x(t+\tau))\right\rangle\,d\tau+\\
&+\frac{\varphi(t)-\varphi(t+s)}{s}\cdot(1-C\cdot\left(\varphi(t)-\varphi(t+s))\right)-\dfrac 1s\int_0^s\left\langle p(t+\tau),F_{p(t+\tau)}(x(t+\tau))\right\rangle\,d\tau.
\end{align*}}
Thus, for every $0<t<T$ we have
\[\limsup_{s\rightarrow 0^-}\frac{\varphi(t+s)-\varphi(t)}{s}\leq -1.\]
This implies that $\lim_{s\rightarrow 0^-}\frac{\varphi(t+s)-\varphi(t)}{s}=-1$. 
Therefore, we finally have that $\dot{\varphi}(t)=-1$ for all $t\in ]0,T(x)[$, so
\[T(x(t))=\varphi(t)=\varphi(0)-t=T(x)-t,\]
that is, $x(\cdot)$ is an optimal trajectory.
\end{proof}

\subsection{Regularity results}$ $

\bigskip

In order to prove the regularity results of this section we need the following technical lemma, the proof of which is postponed to the Appendix.

\begin{lemma}\label{lemma:techno}
Let $D\subseteq\mathbb R^n$ be open,  let $f\in C^0(D)$ be such that $\mathrm{hypo}\,f$ satisfies an external sphere condition with a locally uniform radius,
and let $V\subseteq D$ be a bounded open  set. If $\nabla f$ is essentially bounded on $V$, then $f$ is Lipschitz continuous on $V$. 
\end{lemma}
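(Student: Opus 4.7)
My plan is to exploit a semiconcavity-type estimate that the external sphere condition forces at every point of classical differentiability of $f$, and then propagate it to all of $V$ by density and continuity. First I would cover $V$ by finitely many balls $B_i=B(x_i,r_i)\Subset D$ on which $\mathrm{hypo}\,f$ satisfies the external sphere condition with a common radius $\rho>0$ (possible by compactness of $\overline V$ and the local uniformity of the radius), set $M=\|\nabla f\|_{L^\infty(V)}$, and observe that it is enough to prove Lipschitz continuity on each $B_i$ with a uniform constant.

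Fix $y\in B_i$ at which $f$ is classically differentiable; by hypothesis these points form a full-measure, hence dense, subset of $V$. A short computation using only the definition of proximal normal and the first-order expansion $f(z)=f(y)+\langle\nabla f(y),z-y\rangle+o(\|z-y\|)$ shows that
\[
N^P_{\mathrm{hypo}\,f}(y,f(y))\subseteq\{\lambda(-\nabla f(y),1):\lambda\ge 0\},
\]
so the nontrivial proximal normal supplied by the external sphere condition is forced to be a positive multiple of $(-\nabla f(y),1)$. Translating the associated proximal inequality (with a realizing ball of radius $\rho$) into an estimate on the graph of $f$ yields, for every $z\in B_i$,
\[
f(z)-f(y)-\langle\nabla f(y),z-y\rangle\le\frac{\sqrt{M^2+1}}{2\rho}\bigl(\|z-y\|^2+(f(z)-f(y))^2\bigr).
\]
Writing $d=\|z-y\|$, $\Delta=f(z)-f(y)$, $C=\sqrt{M^2+1}/(2\rho)$, this reads $\Delta\le Md+C(d^2+\Delta^2)$; by continuity of $f$, for $d$ less than some $\delta>0$ one has $C|\Delta|\le 1/2$, so the $\Delta^2$ term is absorbed and $\Delta\le 2Md+2Cd^2$.

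Swapping the roles of $y$ and $z$ (both selected among differentiability points) produces the symmetric two-sided estimate. By density of differentiability points and continuity of $f$, the bound passes to the limit and holds for \emph{every} pair $y,z\in B_i$ with $\|z-y\|<\delta$; a standard chaining argument along segments upgrades this local estimate to Lipschitz continuity on $B_i$ with a constant depending only on $M$, $\rho$, and $\mathrm{diam}\,V$. Assembling the finite cover yields the result on $V$.

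\emph{The hard part} is the absorption of the $C\Delta^2$ term, which is genuinely nonlinear and requires a preliminary bootstrap via the continuity modulus of $f$ before the quadratic correction can be dominated by the linear term. A more delicate secondary point is that essential boundedness of $\nabla f$ provides the classical gradient only on a full-measure set; one must verify that continuity of $f$ together with the estimate on that set suffices to propagate the Lipschitz bound everywhere. The key structural fact that makes this work is that at every differentiability point the proximal normal cone of $\mathrm{hypo}\,f$ automatically excludes any horizontal component, so no pathological ``vertical'' jumps can appear in the limiting process.
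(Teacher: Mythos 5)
Your proof is correct, but it takes a genuinely different route from the paper's. The paper argues by contradiction: assuming the difference quotient of $f$ blows up near some $\bar x\in V$, it extracts sequences of differentiability points $z_j,w_j\to\bar x$ with gradients bounded by $L$ along which the quotient still blows up, divides the external sphere inequality by $|f(z_j)-f(w_j)|$, and passes to the limit; since the limiting proximal normals have vertical component bounded below by $1/\sqrt{1+L^2}$, while the normalized inequality forces that component to be $\le 0$ in the limit, a contradiction results. You instead turn the same two ingredients --- the canonical form $N^P_{\mathrm{hypo}\,f}(y,f(y))\subseteq\{\lambda(-\nabla f(y),1):\lambda\ge 0\}$ at differentiability points and the proximal inequality realized with radius $\rho$ --- into a direct quantitative estimate, absorbing the $C\Delta^2$ term once the continuity modulus guarantees $C|\Delta|\le 1/2$, and then spreading the bound by density of differentiability points, continuity, and chaining. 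Your absorption step is sound (the case $\Delta<0$ being covered by the symmetric estimate at the other endpoint), and what it buys is an explicit Lipschitz constant of order $2M+O(\delta)$ together with the avoidance of the subsequence extractions and the monotone-subsequence case analysis of the paper's argument; what the paper's route buys is that it never needs the uniform-continuity bookkeeping or the chaining step, since it localizes at a single point and only establishes a pointwise Lipschitz bound there.

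Two small points to tighten. First, the a.e.\ differentiability of $f$ on $V$ is not part of the hypothesis that $\nabla f$ is essentially bounded; it is a consequence of the external sphere condition on $\mathrm{hypo}\,f$ (Theorem~3.1 in \cite{NK}, which the paper invokes for exactly this purpose), so you should cite that rather than say ``by hypothesis''. Second, your chaining yields a uniform Lipschitz constant on each ball of the finite cover, hence local Lipschitz continuity on $V$ with a uniform local constant; promoting this to a single constant on all of $V$ would require some control of the intrinsic geometry of $V$. The paper's own proof has the same limitation (it only proves a pointwise bound), and only the local statement is used in Lemma~\ref{lemma:petr_revis}, so this is not a substantive gap.
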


\begin{lemma}\label{lemma:petr_revis}
Let $\mathcal S\subseteq \mathbb R^n$ be a closed nonempty set with a $C^{1,1}$-smooth  boundary. Assume $(F)$ and $(H)$. Suppose  $H\in C^{1,1}\left(\mathbb R^n\times(\mathbb R^n\setminus\{0\})\right)$
and  $T(\cdot)$ is continuous in $\mathcal R$.
Then $x_0\in\mathcal R\setminus\mathcal S$ is a point at which $T(\cdot)$ fails to be Lipschitz continuous if and only if there exists:
\begin{itemize}
\item[(i)]  an optimal trajectory $x_0(\cdot)$
with $x_0(0)=x_0$, $\bar x_0:=x_0(T(x_0))\in\mathcal S$, and
\item[(ii)] a unit vector  $\xi_0\in N^{P}_{\overline{\mathbb R^n\setminus \mathcal S}}(\bar x_0)$  such that $H(\bar x_0,\xi_0)=0$. 
\end{itemize}
\end{lemma}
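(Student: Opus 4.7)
My plan is to handle the two implications separately, with the backward direction ($\Leftarrow$) essentially a direct application of Theorem~\ref{thm:dai}(ii), and the forward direction ($\Rightarrow$) a limiting argument based on Lemma~\ref{lemma:techno}.

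\emph{Sufficient direction.} Assume (i) and (ii). I would apply Theorem~\ref{thm:dai}(ii) with the terminal choice $p(T(x_0))=\xi_0$, obtaining an absolutely continuous arc $p(\cdot)$ on $[0,T(x_0)]$ solving the dual system with $-p(t)\in\partial^{\infty}T(x_0(t))$ for all $t\in[0,T(x_0))$; in particular $-p(0)\in\partial^{\infty}T(x_0)$. To conclude, I need $p(0)\neq 0$. By Remark~\ref{rem:cons}, $|H(x_1,p)-H(x_2,p)|\leq K\|p\|\cdot\|x_1-x_2\|$, so every $\zeta\in\partial_xH(x,p)$ satisfies $\|\zeta\|\leq K\|p\|$. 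Since $-\dot{p}(t)\in\partial_xH(x_0(t),p(t))$, this gives $\|\dot{p}(t)\|\leq K\|p(t)\|$, and Gronwall backward from $\|p(T(x_0))\|=\|\xi_0\|=1$ yields $\|p(0)\|\geq e^{-KT(x_0)}>0$. A nontrivial element of $\partial^{\infty}T(x_0)$ is incompatible with local Lipschitz continuity of $T$ at $x_0$ (testing the defining proximal inequality along the direction of $p(0)$ forces the Lipschitz constant to diverge), so $T$ fails to be Lipschitz at $x_0$.

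\emph{Necessary direction.} Assume $T$ is not Lipschitz at $x_0$. Existence of an optimal trajectory $x_0(\cdot)$ from $x_0$ is standard under (F) via compactness. Since $\mathrm{hypo}\,T$ satisfies an exterior sphere condition with locally uniform radius (see \cite{CK}), Lemma~\ref{lemma:techno} applied contrapositively shows that $\nabla T$ is not essentially bounded on any bounded neighborhood of $x_0$. I would pick a sequence $x_n\to x_0$ at which $T$ is differentiable and $\|\nabla T(x_n)\|\to\infty$; at such points the external sphere property upgrades classical differentiability to $-\nabla T(x_n)\in\partial^{P}T(x_n)$. For each $n$, fix an optimal trajectory $x_n(\cdot)$ terminating at $\bar{x}_n=x_n(T(x_n))\in\mathcal{S}$ and let $\xi_n$ be the unit inner proximal normal to $\mathcal{S}$ at $\bar{x}_n$, uniquely determined by the $C^{1,1}$ regularity of $\partial\mathcal{S}$. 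By Theorem~\ref{thm:dai}(i) there is a dual arc $p_n(\cdot)$ with $p_n(T(x_n))=\xi_n/H(\bar{x}_n,\xi_n)$ and $-p_n(0)=\nabla T(x_n)$; the case $H(\bar{x}_n,\xi_n)=0$ is ruled out because Theorem~\ref{thm:dai}(ii) would then produce a nontrivial horizontal supergradient at the differentiability point $x_n$, contradicting the local Lipschitz behavior of $T$ there.

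Applying the same Gronwall estimate to $p_n$ yields $\|p_n(T(x_n))\|\geq e^{-KT(x_n)}\|p_n(0)\|\to\infty$, hence $|H(\bar{x}_n,\xi_n)|\to 0$. Up to subsequences, continuity of $T$ gives $T(x_n)\to T(x_0)$; the bound \eqref{extra} on velocities provides equicontinuity of $\{x_n(\cdot)\}$, so by Arzel\`a--Ascoli and closedness of the trajectory set under uniform convergence, $x_n(\cdot)\to x_0(\cdot)$ uniformly to an admissible trajectory with $x_0(0)=x_0$ and $x_0(T(x_0))=\bar{x}_0:=\lim\bar{x}_n\in\mathcal{S}$, which is optimal by continuity of $T$. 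The $C^{1,1}$ regularity of $\partial\mathcal{S}$ makes the unit inner normal map continuous on $\partial\mathcal{S}$, so $\xi_n\to\xi_0$ is a unit proximal inner normal to $\mathcal{S}$ at $\bar{x}_0$, and continuity of $H$ gives $H(\bar{x}_0,\xi_0)=\lim H(\bar{x}_n,\xi_n)=0$.

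\emph{Main obstacle.} The delicate step is the identification $-\nabla T(x_n)=p_n(0)$ in the necessity direction. The upgrade from classical gradient to proximal supergradient uses the exterior sphere property of $\mathrm{hypo}\,T$, and the equality with the initial value of the dual arc uses uniqueness of supergradients at differentiability points together with the Pontryagin-type maximum principle underlying Theorem~\ref{thm:dai}(i). Once this identification is made, the Gronwall estimate, the continuity of $H$, and the continuity of the normal map on the $C^{1,1}$ target turn the passage to the limit into a routine compactness argument.
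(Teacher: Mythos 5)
Your proof is correct and follows essentially the same route as the paper's: Lemma~\ref{lemma:techno} applied contrapositively to produce differentiability points $x_n\to x_0$ with $\|\nabla T(x_n)\|\to\infty$, the dual arcs of Theorem~\ref{thm:dai}/Proposition~\ref{prop:dai-0} with $-p_n(0)=\nabla T(x_n)$, a Gronwall estimate forcing the terminal values $\|p_n(T(x_n))\|=1/|H(\bar x_n,\xi_n)|$ to blow up, and an Arzel\`a--Ascoli passage to the limit, while the converse is the same direct application of Theorem~\ref{thm:dai}(ii). Your explicit check that $p(0)\ne 0$ in the sufficiency direction is a worthwhile detail the paper leaves implicit; the only imprecision is that ruling out $H(\bar x_n,\xi_n)=0$ should be justified by the incompatibility of a nonzero horizontal proximal supergradient with differentiability at $x_n$ (where $N^P_{\mathrm{hypo}\,T}(x_n,T(x_n))$ is the single ray through $(-\nabla T(x_n),1)$), not by ``local Lipschitz behavior'' there, since differentiability at a point does not imply local Lipschitz continuity.
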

\begin{proof} First of all, observe that, if (i) and (ii)  hold true at a point  $x_0\in\mathcal R\setminus\mathcal S$, then 
$\partial^{\infty}T(x_0)\ne\varnothing$ thanks to Theorem~\ref{thm:dai}. Therefore, $T(\cdot)$ cannot be Lipschitz at $x_0$.

\medskip
In order to prove the converse, let $x_0\in\mathcal R\setminus\mathcal S$ be a point at which $T(\cdot)$ fails to be Lipschitz. Note that, owing to Theorem~4.1 in  \cite{CK}, $\mathrm{hypo}\,T$ satisfies an external sphere condition with a locally constant radius. Consequently, by Lemma~4.1 and Corollary~4.1 in \cite{NK}, $x\mapsto N^P_{\mathrm{hypo}\,T}(x,T(x))$ has closed graph, 
and $\partial^PT(x)$ reduces to the singleton $\{\nabla T(x)\}$ at every $x\in\mathrm{dom}\,\nabla T$. Moreover, $T(\cdot)$ is differentiable a.e. 
in $\mathcal R\setminus S$.
%
%
Then, by  Lemma~\ref{lemma:techno}, $\nabla T$ must be unbounded on any neighborhood of $x_0$. So, there exists a sequence $\{x_j\}_{j\geqslant 1}\subset\mathrm{dom}\,\nabla T$ and a unit vector $\zeta_0\in \mathbb R^n$ such that 
\[
 \lim_{j\to\infty} x_j= x_0\,,\quad \lim_{j\to\infty}|\nabla T(x_j)|=\infty \quad\text{and}\quad \lim_{j\to\infty}\dfrac{\left(-\nabla T(x_j),1\right)}{|\left(-\nabla T(x_j),1\right)|}=(\zeta_0,0).\]
Notice that, since the map  $x\mapsto N^P_{\mathrm{hypo}\,T}(x,T(x))$ has closed graph, $\zeta_0\in \partial^{\infty} T(x_0)$. 


Next, thanks to Proposition~\ref{prop:dai-0}, we can construct sequences of absolutely continuous arcs $\{x_j(\cdot)\}_{\geqslant 1}$, $\{p_j(\cdot)\}_{\geqslant 1}$,  and unit vectors $\{\xi_j\}_{\geqslant 1}$
with the following properties:
\begin{enumerate}
\item[(a)] $x_j(0)=x_j$ and $\bar x_j:=x_j(T(x_j))\in \mathcal S$, i.e., $x_j(\cdot)$ is an optimal trajectory starting from $x_j$ which reaches the target at $\bar x_j$;
\item[(b)] $\xi_j\in N^P_{\overline{\mathbb R^n\setminus \mathcal S}}(\bar x_j)$, and $H(\bar x_j,\xi_j)\ne 0$;
\item[(c)] $p_j:=p_j(T(x_j))=\dfrac{\xi_j}{H(\bar x_j,\xi_j)}$;
\item[(d)] $(x_j(\cdot),p_j(\cdot))$ satisfies
\begin{equation}\label{eq:Couple-S3}
\begin{cases}
\hspace{.3cm}\dot{x_j}(t)=\nabla_pH(x_j(t),p_j(t))\vspace{.2cm}\\ 
-\dot{p_j}(t)=\nabla_xH(x_j(t),p_j(t))
\end{cases}
\quad \textrm{for a.e.} \;t\in [0,T(x_j)].
\end{equation}
\end{enumerate}
Also, since $x_j\in\mathrm{dom}\,\nabla T$,   Theorem~\ref{thm:dai} ensures that  $-p_j(0)=\nabla T(x_j)$.


Since $x_j\to x_0$ and $T(\cdot)$ is continuous,  a well known compactness property of the trajectories of the differential inclusion  
\begin{equation}\label{eq:limit}
\dot x(t)\in F(x(t))
\end{equation}
ensure that, up to extracting a subsequence, one can assume that $\{x_j(\cdot)\}_{j\in\geqslant 1}$ converges uniformly to an absolutely continuous arc, $x_0(\cdot)$, which satisfies \eqref{eq:limit} together 
with $x_0(0)=x_0$. Again, since $T(\cdot)$ is continuous and $\mathcal S$ is closed, we have that $\bar x_0:=x_0(T(x_0))\in \mathcal S$, i.e. $x_0(\cdot)$ is optimal.
We can also assume  that $\{\xi_j\}_{j\in\mathbb N}$ converges to the (proximal)  inner unit normal $\xi_0$ to $\mathcal S$ at $\bar x_0$ by the smoothness  of $\mathcal S$.

We now claim that $|p_j|\to \infty$ as $j\to\infty$. Indeed, since $H$ is homogeneous of degree $1$ in $p$ and of class $C^{1,1}\left(\mathbb R^n\times(\mathbb R^n\setminus\{0\})\right)$, for some constant $M>0$
\[|\nabla_x H(x,p)|\leq  M|p|\]
for all $p\in R^n\setminus\{0\}$ and  all $x$ in the bounded set
\[A:=\big\{x_j(t)~:~t\in [0,T(x_j)]\big\}\cup \big\{x_0(t)~:~t\in[0,T(x_0)]\big\}.\]
Therefore,  \eqref{eq:Couple-S3} yields, for all $j\geq  1$ and a.e. $t\in [0,T(x_j)]$,
\[-\dfrac{1}{2}\dfrac{d}{dt}|p_j(t)|^2=\langle -\dot p_j(t),p_j(t)\rangle=\big\langle \nabla_x H\big(x_j(t),p_j(t)\big),p_j(t)\big\rangle\leq  M|p_j(t)|^2.\]
The above differential inequality implies that
\begin{equation*}
|p_j|\geq  e^{-MT(x_j)}|p_j(0)|.
\end{equation*}
Since $|p_j(0)|\to \infty$ and $T(x_j)\to T(x_0)$ as $j\to\infty$, we conclude that also $|p_j|\to \infty$.

Finally, observe that  $p_j=|p_j|\cdot\xi_j$ and $H(\bar x_j,p_j)=1$ for every $j\in\mathbb N$. So,
\[1=\lim_{j\to\infty}H(\bar x_j,p_j)=\lim_{j\to\infty}|p_j|\cdot H(\bar x_j,\xi_j).\]
Since $|p_j|\to \infty$, we must have that 
\begin{equation*}
\lim_{j\to \infty}H(\bar x_j,\xi_j)=H(\bar x_0,\xi_0)=0.
\end{equation*}
This shows points (i) and (ii) above. Invoke Theorem~\ref{thm:dai} to complete the proof .
\end{proof}

\begin{theorem}\label{thm:sbv}
Suppose $H\in C^{1,1}\big(\mathbb R^n\times(\mathbb R^n\setminus\{0\})\big)$, and let 
 $g\in C^2_b(\mathbb R^n)$. Define the target set $\mathcal S$ by  \[\mathcal S=\{x\in\mathbb R^n~:~g(x)\le 0\},\] 
and let $\Sigma$ be a countably $\mathscr H^{n-2}$-rectifiable subset of $\partial\mathcal S$.
In addition to $(F)$ and $(H)$, assume that:
\begin{enumerate}
\item[(a)] $0\in F(x)$ for every $x\in\partial\mathcal S$,
\item[(b)] $\nabla g\ne 0$ on $\partial\mathcal S$,
\item[(c)] $T(\cdot)$ is continuous in $\mathcal R$, and
\item[(d)] for all $x\in\partial \mathcal S\setminus\Sigma$ and $\lambda\in\mathbb R$
\begin{equation}\label{eq:nonzero_2}
\nabla_x H(x,-\nabla g(x))-\nabla_p H(x,-\nabla g(x))\cdot \nabla^2 g(x)\ne \lambda\nabla g(x)\,.
\end{equation}
\end{enumerate}
Then:
\begin{enumerate}
\item[(i)] $T(\cdot)$ is a function of special bounded variation on $\mathcal R\setminus \mathcal S$, and
\item[(ii)] there exists a closed countably 
$\mathscr H^{n-1}$-rectifiable set 
$\mathscr S\subset \mathcal R$ such that $T(\cdot)$ is locally semiconcave on  $\mathcal R\setminus \big(\mathcal S\cup\mathscr S\big)$.
\end{enumerate}
\end{theorem}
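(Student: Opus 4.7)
The plan is to identify the non-Lipschitz set of $T(\cdot)$ with the image, under the backward Hamiltonian flow, of the ``degenerate'' portion of $\partial\mathcal S$, and then exploit condition (d) to control its size. Lemma~\ref{lemma:petr_revis} tells us that every $x_0\in\mathcal R\setminus\mathcal S$ at which $T$ fails to be Lipschitz lies on an optimal trajectory ending at some $\bar x_0\in\partial\mathcal S$ with $H(\bar x_0,\xi_0)=0$, where $\xi_0=-\nabla g(\bar x_0)/|\nabla g(\bar x_0)|$ is the (unique) inner unit normal (since $\partial\mathcal S$ is $C^{1,1}$ by (b)). Setting
\[
\mathcal B:=\{x\in\partial\mathcal S\,:\,H(x,-\nabla g(x))=0\},
\]
the non-Lipschitz set $N$ of $T$ is therefore contained in the union of all optimal trajectories reaching $\mathcal B$.

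The first main step is to show that $\mathcal B\setminus\Sigma$ is countably $\mathscr H^{n-2}$-rectifiable in $\partial\mathcal S$. Define $\varphi(x):=H(x,-\nabla g(x))$, which is $C^{1,1}$ on a neighborhood of $\partial\mathcal S$ thanks to $\nabla g\ne 0$ and $H\in C^{1,1}(\mathbb R^n\times(\mathbb R^n\setminus\{0\}))$. Its full gradient is
\[
\nabla\varphi(x)=\nabla_xH(x,-\nabla g(x))-\nabla_pH(x,-\nabla g(x))\cdot\nabla^2g(x),
\]
and hypothesis (d) asserts precisely that, at every $x\in\partial\mathcal S\setminus\Sigma$, this gradient is not parallel to the normal $\nabla g(x)$. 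Consequently the tangential derivative of $\varphi|_{\partial\mathcal S}$ is nonzero on $\mathcal B\setminus\Sigma$, and the implicit function theorem (applied on the $C^{1,1}$ manifold $\partial\mathcal S$) represents $\mathcal B\setminus\Sigma$ locally as a $C^{1,1}$ graph of codimension one in $\partial\mathcal S$. Joining this with the assumed $\mathscr H^{n-2}$-rectifiability of $\Sigma$, the full set $\mathcal B\cup\Sigma$ is countably $\mathscr H^{n-2}$-rectifiable.

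The second main step is to propagate this smallness along optimal trajectories via the Hamiltonian flow. Since $H\in C^{1,1}$ away from $p=0$, the system in \eqref{eq:Couple-S3} coupled with the terminal condition $p(T)=\xi_0$ (the horizontal case (ii) of Theorem~\ref{thm:dai}) is a well-posed backward Cauchy problem, whose solution depends in a locally Lipschitz way on the starting pair $(\bar x,\xi_{\bar x})$. Parameterizing $\bar x\in\mathcal B\cup\Sigma$ and backward time $s\geq 0$ yields a locally Lipschitz map $\Phi:(\mathcal B\cup\Sigma)\times[0,+\infty)\to\mathbb R^n$ whose image contains $N$ by Lemma~\ref{lemma:petr_revis}; being the Lipschitz image of a countably $\mathscr H^{n-1}$-rectifiable parameter set, this image is itself countably $\mathscr H^{n-1}$-rectifiable, and we take $\mathscr S$ to be its closure in $\mathcal R$ (assumption (a) ensures that trajectories terminating at $\partial\mathcal S$ actually correspond to admissible optimal trajectories and can be approximated by interior ones, so closure does not enlarge the set beyond controlled size). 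With this $\mathscr S$ fixed, every point of $\mathcal R\setminus(\mathcal S\cup\mathscr S)$ is a Lipschitz point of $T$, so combining local Lipschitz continuity with the locally uniform external sphere condition on $\mathrm{hypo}\,T$ provided by Theorem~4.1 of \cite{CK} and by Lemma~\ref{lemma:techno} upgrades $T$ to locally semiconcave there, yielding (ii).

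For (i), the external sphere condition on $\mathrm{hypo}\,T$ (Theorem~4.1 of \cite{CK}) gives $T\in BV_{\mathrm{loc}}(\mathcal R\setminus\mathcal S)$, and the set of points at which $\nabla T$ does not exist is contained in $\mathscr S$; thus the singular part $D^sT$ is concentrated on a Borel set of $\sigma$-finite $\mathscr H^{n-1}$-measure, and Proposition~\ref{propSBV} delivers $T\in SBV_{\mathrm{loc}}(\mathcal R\setminus\mathcal S)$. The hard part will be the propagation step: we must maintain Lipschitz control of the backward Hamiltonian flow starting with $|p(T)|=1$ even as $|p(t)|$ may blow up away from the target (this is exactly the horizontal-normal phenomenon seen in Lemma~\ref{lemma:petr_revis}), and we must verify, via a careful approximation by nondegenerate initial data, that $\Phi$ indeed catches every non-Lipschitz point and that its image is closed after taking closure inside $\mathcal R$.
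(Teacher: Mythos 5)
Your overall strategy---flow the degenerate part of $\partial\mathcal S$ backwards along the Hamiltonian characteristics and bound the non-Lipschitz set by the resulting Lipschitz image---is the same as the paper's. You diverge in how you show the degenerate boundary set is $\mathscr H^{n-2}$-small: you read hypothesis (d) as saying that $\varphi(x):=H(x,-\nabla g(x))$ has nonvanishing tangential gradient on $\partial\mathcal S\setminus\Sigma$, so that $\mathcal B\setminus\Sigma=\{\varphi=0\}\setminus\Sigma$ is locally a $C^1$ hypersurface of $\partial\mathcal S$ by the implicit function theorem, and you then flow out $\mathcal B\cup\Sigma$. The paper instead uses hypothesis (a): since $0\in F(x)$ on $\partial\mathcal S$, $\varphi\ge 0$ there, so any zero of $\varphi$ is a constrained minimum; the Lagrange multiplier rule then forces $\nabla\varphi(\bar x)=\lambda\nabla g(\bar x)$, which (d) forbids off $\Sigma$, so in fact $\mathcal B\subseteq\Sigma$ and one flows out $\Sigma$ alone. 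Both arguments are sound (note $\varphi$ is only $C^1$, not $C^{1,1}$, since $g\in C^2_b$, but $C^1$ suffices for your level-set argument); yours has the mild advantage of not using (a) for this step, while the paper's gives the sharper containment $\mathcal B\subseteq\Sigma$. Your worry about $|p(t)|$ blowing up along the backward flow is unfounded: with $|p(T)|=1$ and $|\nabla_xH(x,p)|\le M|p|$ on the relevant compact set, Gronwall keeps $|p(t)|$ bounded and bounded away from $0$ on $[0,T]$, so the flow is Lipschitz in $(t,x)$.

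There is one genuine gap: you define $\mathscr S$ as the \emph{closure} of the image $\Phi\big((\mathcal B\cup\Sigma)\times[0,+\infty)\big)$, but the closure of a countably $\mathscr H^{n-1}$-rectifiable set need not be countably $\mathscr H^{n-1}$-rectifiable (nor even $\mathscr H^{n-1}$-$\sigma$-finite), so statement (ii) and the application of Proposition~\ref{propSBV} do not follow as written; your parenthetical appeal to (a) does not repair this. The correct move, which is what the paper does, is to take $\mathscr S$ to be the set of non-Lipschitz points of $T$ itself: this set is \emph{contained in} the rectifiable image (hence countably $\mathscr H^{n-1}$-rectifiable, a property stable under taking subsets), and it is closed in $\mathcal R$ for a separate reason, namely the closed-graph property of $x\mapsto N^P_{\mathrm{hypo}\,T}(x,T(x))$ from Lemma~4.2 and Corollary~4.2 of \cite{NK}, which makes horizontal proximal normals persist in the limit. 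Separately, your sentence ``the set of points at which $\nabla T$ does not exist is contained in $\mathscr S$'' is false (a Lipschitz point need not be a point of differentiability), but it is also not needed: local Lipschitz continuity of $T$ off $\mathscr S$ already gives $DT\ll\mathscr L^n$ there, so $D^sT$ is concentrated on $\mathscr S$ and Proposition~\ref{propSBV} applies once the rectifiability of $\mathscr S$ is secured as above.
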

\begin{proof}
We begin by showing the following property needed for the sequel of the proof: 
\begin{itemize}
\item[$(\star)$] if, for some point   $\bar x\in\partial\mathcal S$, there exists a nonzero vector $\xi\in N^P_{\overline{\mathbb R^n\setminus\mathcal S}}(x)$ such that $H(\bar x,\xi)=0$, then
$\bar x\in \Sigma$.
\end{itemize}
Indeed, since $0\in F(x)$, we have that $H(x,p)\ge 0$ for all $x\in\partial\mathcal S$ and $p\in\mathbb R^n$. 
Therefore, $H$ attains a constrained minimum at $(\bar x,\xi)$. Moreover, since $H(\bar x,\cdot)$ is positively homogeneous, 
$(\bar x,r \xi)$ also minimizes $H$ constrained to $\partial\mathcal S$ for all $r \geq  0$. 
Now,  $-\nabla g(\bar x)$ can be represented as $r \xi$ for some $r>0$ because $\mathcal S$ has a smooth boundary. 
So, $\bar x$ is a minimizer of the smooth function $x\mapsto H(x,-\nabla g(x))$ on $\partial \mathcal S$. Then, by Lagrange multiplier rule and \eqref{eq:nonzero_2}, we have $\bar x\in \Sigma$.

\medskip
Next, let us prove the $SBV$ property~(i). Since  $\mathrm{hypo}~T$ satisfies an external sphere condition, owing to Proposition~6.1 in \cite{MKV} we have that $T\in BV_{loc}(\mathcal R\backslash\mathcal{S})$. So, we just need to show that, on $\mathcal{R}\backslash\mathcal{S}$, the singular part of $DT(\cdot)$ is concentrated on a set of $\sigma$-finite $\mathscr H^{n-1}$-measure.  Recalling that the characteristics of  the pde 
\begin{equation*}
H(x,-\nabla T)-1=0
\end{equation*}
satisfy
\begin{equation}\label{eq:char}
\begin{cases}
\hspace{.cm}
\dot X(t)=-\nabla_p H(X(t),P(t))
&
\\
\hspace{.cm}
\dot P(t)=\nabla_x H(X(t),P(t)),
\end{cases}
\end{equation}
consider the flow $(X,P):\mathbb R\times\mathbb R^n\times\mathbb R^n\to \mathbb R^n\times\mathbb R^n$, that is, for any $(x,p)\in\mathbb R^n\times\mathbb R^n$, $(X,P)(\cdot,x,p)$ is the solution pf \eqref{eq:char} with initial conditions
\begin{equation}\label{eq:charic}
X(0)=x\,,\quad P(0)=p.
\end{equation}
Notice that $(t,x)\mapsto (X,P)\big(t,x,-\nabla g(x)\big)$ is Lipschitz continuous under our assumptions. Now, set
\begin{equation*}
\Phi(t,x)=X\big(t,x,-\nabla g(x)\big)\qquad t\geq 0,\;x\in \mathbb R^n.
\end{equation*}
Then, by Lemma~\ref{lemma:petr_revis} we have that, 
for any $t>0$,
\[\Sigma_t:=\big\{x\in\mathcal R\setminus S~:\partial^{\infty} T(x)\ne\varnothing\,,\; T(x)\leq t\big\}
\subseteq\Phi\big((0,t]\times\Sigma\big)\]
is countably $\mathscr H^{n-1}$-rectifiable since $\Sigma$ is countably $\mathscr H^{n-2}$-rectifiable by hypothesis.  Thus,  
\[\mathscr S:=\big\{x\in\mathcal R\setminus \mathcal S~:~\textrm{$T(\cdot)$ fails to be Lipschitz at $x$}\big\}
=\bigcup_{k=1}^\infty\Sigma_k \]
is countably $\mathscr H^{n-1}$-rectifiable as well.  Moreover, owing to Lemma~4.2 and Corollary~4.2 in \cite{NK},   $\mathscr S$ is closed in $\mathcal R$. Therefore, in view of Proposition \ref{propSBV}, we conclude that $T(\cdot)\in SBV_{loc}(\mathcal R\setminus\mathcal S)$.
 
Finally, by Corollary~4.3 in \cite{CK}, $T(\cdot)$ is  locally semiconcave on $\mathcal R\setminus\big(\mathcal S\cup\mathscr S\big)$ since it is locally Lipschitz  on such a set. 
 \end{proof}

\begin{example}[Prescribing singularities]
Let  $g\in C^2_b(\mathbb R^n)$, let
\[\mathcal S:=\{x\in\mathbb R^n~:~g(x)\le 0\},\]
and assume that $\nabla g(x)\ne 0$ at every $x\in\mathcal S$.
Choose a countable set $\Sigma\subseteq \mathcal{S}$ such that, for some function
 $\psi:\Omega\to[0,+\infty[$ of class $C^{1,1}$, we have $\Sigma=\{y\in\mathbb R^n~:~\psi(y)=0\}$.
 Define
\[H(x,p):=\psi(x)\mathrm{dist}\big(p,\nabla g(x)^\perp\big),\]
where $\nabla g(x)^\perp$ denotes the hyperplane  in $\mathbb R^n$ that is orthogonal to $\nabla g(x)$.
For  all $x,p,q\in\mathbb R^n$, we have that $H(x,\lambda p)=\lambda H(x,p)\ge 0$ for every $\lambda\ge 0$, and $H(x,p+q)\le H(x,p)+H(x,q)$.  So,
$H(x,\cdot)$ is the support function of a closed convex subset $F(x)$ of $\mathbb R^n$.
With these choices, we have that \eqref{eq:nonzero_2} is satisfied.
\end{example}

\appendix

\section{Proof of  Lemma \ref{lemma:techno}}

We now prove  the technical Lemma \ref{lemma:techno}.

\begin{proof}
Since  $\mathrm{hypo}\,f$ satisfies the external sphere condition, for some $\theta>0$, we have that for every $y,z\in V$ 
there exists a unit vector $v\in N^P_{\mathrm{hypo}\,f}(z,f(z))$ such that
\begin{equation}\label{eq:esp}\left\langle v,\left(y-z,f(y)-f(z)\right)\right\rangle\le \dfrac{1}{2\theta}\left(|y-z|^2+|f(y)-f(z)|^2\right).\end{equation}
For $f(y)\ne f(z)$, dividing the above inequality by $|f(y)-f(z)|$ we obtain
\begin{align}\label{eq:esp2}
\left\langle v,\left(\dfrac{y-z}{|f(y)-f(z)|},\dfrac{f(y)-f(z)}{|f(y)-f(z)|}\right)\right\rangle&\le \dfrac{1}{2\theta}\left(\dfrac{|y-z|}{\frac{|f(y)-f(z)|}{|y-z|}}+|f(y)-f(z)|\right).
\end{align}
By contradiction, suppose there exists $L>0$ such that $|\nabla f(q)|\le L$ for a.e. $q\in V$ and  there exists $\bar x\in V$ such that
\begin{equation}\label{eq:appecontra}
\limsup_{\substack{y,x\to \bar x\\ y\ne x}}\dfrac{|f(y)-f(x)|}{|y-x|}=\infty.
\end{equation}
Let $\{y_j\}_{j\in\mathbb N}\subseteq V$ and $\{x_j\}_{j\in\mathbb N}\subseteq V$ be sequences converging to $\bar x$ such that $x_j\ne y_j$ for all $j\in\mathbb N$, and
\[\lim_{j\to \infty}\dfrac{|f(y_j)-f(x_j)|}{|y_j-x_j|}=\infty.\]

We claim that there exist $\{z_j\}_{j\in\mathbb N}$ and $\{w_j\}_{j\in\mathbb N}$ such that $z_j\to \bar x$, $w_j\to\bar x$, $f$ is differentiable at $z_j$ and at $w_j$, $|\nabla f(z_j)|\le L$ 
and $|\nabla f(w_j)|\le L$ for every $j\in\mathbb N$,
and
\[
\lim_{j\to \infty}\dfrac{|f(z_j)-f(w_j)|}{|z_j-w_j|}=\infty.
\]
Indeed, recalling that $f\in C^0(V)$, for every $j\in\mathbb N$ there exists $\delta_j>0$ such that if $|y_j-q|\le \delta_j$, we have $|f(y_j)-f(q)|\le |f(y_j)-f(x_j)|^2$. 
Since $f$ is a.e. differentiable on $V$ by Theorem~3.1 in \cite{NK}, and $|\nabla f(q)|\le L$ for a.e. $q\in V$ by assumption,
for every $j\in\mathbb N$ we can find 
$z_j\in B(y_j,\delta_j)\cap B\left(y_j,|y_j-x_j|^2\right)$ such that $f$ is differentiable at $z_j$,
$|\nabla f(z_j)|\le L$, and $z_j\ne x_j$. 
Notice that
\begin{align*}
|x_j-y_j|&\le |x_j-z_j|+|z_j-y_j|\le|x_j-z_j|+|x_j-y_j|^2\\
|x_j-z_j|&\le |x_j-y_j|+|y_j-z_j|\le |x_j-y_j|+|x_j-y_j|^2.
\end{align*}
Dividing both inequalities by $|x_j-y_j|$ and letting $j\to \infty$ yields
\[\lim_{j\to \infty}\dfrac{|x_j-z_j|}{|x_j-y_j|}=1.\]
Similarly, since
\begin{align*}
|f(x_j)-f(y_j)|&\le |f(x_j)-f(z_j)|+|f(z_j)-f(y_j)|\\&\le|f(x_j)-f(z_j)|+|f(x_j)-f(y_j)|^2\\ &\\
|f(x_j)-f(z_j)|&\le |f(x_j)-f(y_j)|+|f(y_j)-f(z_j)|\\&\le |f(x_j)-f(y_j)|+|f(x_j)-f(y_j)|^2,
\end{align*}
we obtain
\[\lim_{j\to \infty}\dfrac{|f(x_j)-f(z_j)|}{|f(x_j)-f(y_j)|}=1.\]
So,
\begin{align*}\lim_{j\to \infty}\dfrac{|f(z_j)-f(x_j)|}{|z_j-x_j|}&=
\lim_{j\to\infty}\dfrac{|f(z_j)-f(x_j)|}{|f(y_j)-f(x_j)|}\dfrac{|y_j-x_j|}{|z_j-x_j|}\dfrac{|f(y_j)-f(x_j)|}{|y_j-x_j|}\\ &=\lim_{j\to \infty}\dfrac{|f(y_j)-f(x_j)|}{|y_j-x_j|}=\infty.\end{align*}
To construct $w_j$, it suffices to apply the above reasoning to $x_j,z_j$ instead of $y_j,x_j$.

\medskip

Next, recall that, owing to Lemma~4.1 in \cite{NK},
\begin{align*}
N^P_{\mathrm{hypo}\,f}(z_j,f(z_j))=\left\{\lambda \dfrac{(-\nabla f(z_j),1)}{|(-\nabla f(z_j),1)|}:\,\lambda\ge 0\right\},\\ 
N^P_{\mathrm{hypo}\,f}(w_j,f(w_j))=\left\{\lambda \dfrac{(-\nabla f(w_j),1)}{|(-\nabla f(w_j),1)|}:\,\lambda\ge 0\right\}. 
\end{align*}
We can assume that, as $j\to +\infty$,
\begin{align*}
\dfrac{(-\nabla f(z_j),1)}{|(-\nabla f(z_j),1)|}\to (\zeta,\xi)\;\; \text{and}\;\;\dfrac{(-\nabla f(w_j),1)}{|(-\nabla f(w_j),1)|}\to (\eta,\sigma)\;\;\text{with}\;\; \xi,\sigma\ge\dfrac{1}{\sqrt{1+C^2}}>0
\end{align*}
since $\nabla f(z_j)$ and $\nabla f(w_j)$ are bounded. 
Moreover,  $(\zeta,\xi)$, $(\eta,\sigma)\in N^P_{\mathrm{hypo}\,f}(\bar x,f(\bar x))$ (since $\theta$ does not depend on $z_j$ or $w_j$), and, for every $y\in V$, we have that
\begin{align*}
\left\langle(\zeta,\xi),\left(y-\bar x,f(y)-f(\bar x)\right)\right\rangle&\le \dfrac{1}{2\theta}\left(|y-\bar x|^2+|f(y)-f(\bar x)|^2\right).\\
\end{align*}
We now distinguish  two cases.
\begin{enumerate}
\item[a.] Assume that $\{f(z_j)-f(x_j)\}_{j\in\mathbb N}$ has a monotone increasing subsequence. In this case, since it tends to $0$, after relabeling, we have $f(x_j)-f(z_j)\ge 0$ for every $j\in\mathbb N$. 
Choosing, in \eqref{eq:esp2}, 
\begin{equation*}
y=x_j,\quad z=z_j,\quad v=\dfrac{(-\nabla f(z_j),1)}{|(-\nabla f(z_j),1)|},
\end{equation*}
we have
\begin{align*}
\left\langle\dfrac{(-\nabla f(z_j),1)}{|(-\nabla f(z_j),1)|},\left(\dfrac{x_j-z_j}{|f(x_j)-f(z_j)|},\dfrac{f(x_j)-f(z_j)}{|f(x_j)-f(z_j)|}\right)\right\rangle\le\\
&\hspace{-5cm}\le \dfrac{1}{2\theta}\left(\dfrac{|x_j-z_j|}{\frac{|f(x_j)-f(z_j)|}{|x_j-z_j|}}+|f(x_j)-f(z_j)|\right).
\end{align*}
Passing to the limit in the above inequality yields $\xi\le 0$, which is a contradiction with the fact that $\xi>0$.
\item[]
\item[b.] Assume that $\{f(z_j)-f(x_j)\}_{j\in\mathbb N}$ has a monotone decreasing subsequence. In this case, we have that $\{f(x_j)-f(z_j)\}_{j\in\mathbb N}$ has
a monotone increasing subsequence, and we apply the argument of the previous case by switching the roles of $x_j$ and $z_j$, and replacing $\xi$ with $\sigma$.
\end{enumerate}
Since we have reached a contradiction assuming \eqref{eq:appecontra},  the proof is complete.
\end{proof}

\begin{bibdiv}
\begin{biblist}

\bib{ACK}{article}{
author={Albano, Paolo},
author={Cannarsa, Piermarco},
author={Nguyen, Khai T.},
author={Sinestrari, Carlo},
title={Singular gradient flow of the distance function and homotopy equivalence},
journal={Math. Ann.},
volume={356},
date={2013},
number={1},
pages={23--43},
}

\bib{AC}{book}{
   author={Aubin, Jean-Pierre},
   author={Cellina, Arrigo},
   title={Differential inclusions},
   series={Grundlehren der Mathematischen Wissenschaften [Fundamental
   Principles of Mathematical Sciences]},
   volume={264},
   note={Set-valued maps and viability theory},
   publisher={Springer-Verlag},
   place={Berlin},
   date={1984},
   pages={xiii+342},
   isbn={3-540-13105-1},
   review={\MR{755330 (85j:49010)}},
}

\bib{AF}{book}{
   author={Aubin, Jean-Pierre},
   author={Frankowska, Halina},
   title={Set-Valued Analysis},
   publisher={Birkh\"auser},
   place={Boston},
   date={1990},
}

\bib{AFP}{book}{
   author={Ambrosio, Luigi},
   author={Fusco, Nicola},
   author={Pallara, Diego},
   title={Functions of bounded variation and free discontinuity problems},
   series={Oxford Mathematical Monographs},
   publisher={The Clarendon Press Oxford University Press},
   place={New York},
   date={2000},
   pages={xviii+434},
   isbn={0-19-850245-1},
   review={\MR{1857292 (2003a:49002)}},
}
%

\bib{CK}{article}{
   author={Cannarsa, Piermarco},
   author={Nguyen, Khai T.},
   title={Exterior sphere condition and time optimal control for differential inclusions},
   journal={SIAM J. Control Optim.},
   volume={49},
   date={2011},
   number={6},
   pages={2258--2576},
}

\bib{CS0}{article}{
   author={Cannarsa, Piermarco},
   author={Sinestrari, Carlo},
   title={Convexity properties of the minimum time function},
   journal={Calc. Var. Partial Differential Equations},
   volume={3},
   date={1995},
   number={3},
   pages={273--298},
   issn={0944-2669},
   review={\MR{1385289 (97f:49032)}},
   doi={10.1007/s005260050016},
}

\bib{CS}{book}{
   author={Cannarsa, Piermarco},
   author={Sinestrari, Carlo},
   title={Semiconcave functions, Hamilton-Jacobi equations, and optimal control},
   series={Progress in Nonlinear Differential Equations and their
   Applications, 58},
   publisher={Birkh\"auser Boston Inc.},
   place={Boston, MA},
   date={2004},
   pages={xiv+304},
   isbn={0-8176-4084-3},
   review={\MR{2041617 (2005e:49001)}},
}

\bib{CaW}{article}{
   author={Cannarsa, Piermarco},
   author={Wolenski, Peter R.},
   title={Semiconcavity of the value function for a class of differential inclusions},
   journal={Discrete Contin. Dyn. Syst.},
   volume={29},
   date={2011},
   number={2},
   pages={453--466},
   issn={1078-0947},
   review={\MR{2728465 (2012a:49046)}},
   doi={10.3934/dcds.2011.29.453},
}

\bib{CaMW}{article}{
   author={Cannarsa, Piermarco},
   author={Marino, Francesco},
   author={Wolenski, Peter},
   title={The dual arc inclusion with differential inclusions},
   journal={Nonlinear Anal.},
   volume={79},
   date={2013},
   pages={176--189},
   issn={0362-546X},
   review={\MR{3005035}},
   doi={10.1016/j.na.2012.11.021},
}



\bib{CM}{article}{
   author={Colombo, Giovanni},
   author={Marigonda, Antonio},
   title={Differentiability properties for a class of non-convex functions},
   journal={Calc. Var. Partial Differential Equations},
   volume={25},
   date={2006},
   number={1},
   pages={1--31},
   issn={0944-2669},
   review={\MR{2183853 (2006h:49027)}},
   doi={10.1007/s00526-005-0352-7},
}

\bib{CM3}{article}{
   author={Colombo, Giovanni},
   author={Marigonda, Antonio},
   title={Singularities for a class of non-convex sets and functions, and
   viscosity solutions of some Hamilton-Jacobi equations},
   journal={J. Convex Anal.},
   volume={15},
   date={2008},
   number={1},
   pages={105--129},
   issn={0944-6532},
   review={\MR{2389006 (2009i:49023)}},
}

\bib{CMW}{article}{
   author={Colombo, Giovanni},
   author={Marigonda, Antonio},
   author={Wolenski, Peter R.},
   title={Some new regularity properties for the minimal time function},
   journal={SIAM J. Control Optim.},
   volume={44},
   date={2006},
   number={6},
   pages={2285--2299 (electronic)},
   issn={0363-0129},
   review={\MR{2248184 (2008d:49021)}},
   doi={10.1137/050630076},
}

\bib{CMW2}{article}{
   author={Colombo, Giovanni},
   author={Marigonda, Antonio},
   author={Wolenski, Peter R.},
   title={The Clarke generalized gradient for functions whose epigraph has positive reach},
   journal={Math. Op. Res.},
   volume={38},
   date={2013},
   number={3},
   pages={451--468},
   issn={0364-765X},
   doi={10.1287/moor.1120.0580},
}

\bib{CKH}{article}{
   author={Colombo, Giovanni},
   author={Nguyen, Khai T.},
   title={On the structure of the minimum time function},
   journal={SIAM J. Control Optim.},
   volume={48},
   date={2010},
   number={7},
   pages={4776--4814},
   issn={0363-0129},
   review={\MR{2720234}},
   doi={10.1137/090774549},
}

\bib{CTh}{book}{
   author={Colombo, Giovanni},
   author={Lionel Thibault}, 
   title={Prox-regular sets and applications, \textup{in} Handbook of Nonconvex Analysis, D.Y. Gao, D. Motreanu eds.},
   publisher={International Press},
   place={Somerville, MA},
   date={2010},
   pages={680},
   isbn={978-1-57146-200-8},
}

\bib{CVL}{article}{
   author={Colombo, Giovanni},
   author={Nguyen, Khai T.}, 
   author={Nguyen, Luong V.}, 
   title={SBV regularity for minimum time function},
   journal={Calc. Var. Partial Differential Equations},
   date={2013},
   doi={10.1007/s00526-013-0682-9},
}

\bib{MKV}{article}{
author={Marigonda, Antonio}, 
author={Nguyen, Khai T.}, 
author={Vittone, Davide},
title={Some regularity properties for a class of upper semicontinuous functions}, 
journal={Indiana Univ. Math. Journal.},
volume={62},
date={2013},
number={1}
}

\bib{NK}{article}{
   author={Nguyen, Khai T.},
   title={Hypographs satisfying an external sphere condition and the regularity of the minimum time function},
   journal={J. Math. Anal. Appl.},
   volume={372},
   date={2010},
   pages={611--628},
   doi={10.1016/j.jmaa.2010.07.010},
}

\bib{NV}{article}{
   author={Nguyen, Khai T.},
   author={Vittone, Davide},
   title={Rectifiability of siguralities of non-lipschitz functions},
   journal={J. of Convex Analysis},
   volume={19},
   date={2012},
   number={1},
   pages={159--170},
}

\end{biblist}
\end{bibdiv}

\end{document}